
\documentclass[12pt, a4paper]{article}
\usepackage{amsmath}   
\usepackage{comment}
\usepackage{amsfonts}
\usepackage{amssymb}   
\usepackage{epsfig}
\usepackage{graphicx}
\usepackage{amsthm}
\usepackage{newlfont}
\usepackage{dsfont}
\usepackage{eufrak}
\usepackage{color}

\setcounter{MaxMatrixCols}{10}
\makeatletter

\makeatother
\textheight 25.5cm \textwidth 17 cm \topmargin -2.0 cm
\oddsidemargin -0.5 cm
\newtheorem{thm}{Theorem}[section]
\newtheorem{lem}[thm]{Lemma}
\newtheorem{cor}[thm]{Corollary}
\newtheorem{pro}[thm]{Proposition}
\newtheorem{con}[thm]{Conjecture}
\newtheorem{obs}[thm]{Observation}
\theoremstyle{definition}
\newtheorem{den}[thm]{Definition}
\newtheorem{exa}[thm]{Example}

\theoremstyle{remark}
\newtheorem*{rem}{Remark}

\renewcommand{\labelenumi}{(\roman{enumi})}
\graphicspath{{figures/}}
\begin{document}
\title{Ultra log-concavity and real-rootedness of dependence polynomials}

\footnotetext[1]{The work is partially supported by the National Natural Science Foundation of China (Grant No. 12071194, 11571155).}

\author{Yan-Ting Xie, ~Shou-Jun Xu$\thanks{Corresponding author. E-mail address: ~shjxu@lzu.edu.cn (S.-J. Xu).}$}

\date{\small  School of Mathematics and Statistics, Gansu Center for Applied Mathematics, \\Lanzhou University, Lanzhou, Gansu 730000, China}

\maketitle

\begin{abstract}

For some positive integer  $m$, a real polynomial $P(x)=\sum\limits_{k=0}^ma_kx^k$ with $a_k\geqslant 0$  is called {log-concave} (resp. ultra log-concave) if $a_k^2\geqslant a_{k-1}a_{k+1}$ (resp. $a_k^2\geqslant \left(1+\frac{1}{k}\right)\left(1+\frac{1}{m-k}\right)\cdot$ $a_{k-1}a_{k+1}$) for all $1\leqslant k\leqslant m-1$. If $P(x)$ has only real roots, then it is called {real-rooted}. It is well-known that the conditions of log-concavity, ultra log-concavity and real-rootedness are ever-stronger. 

For a graph $G$, a dependent set is a set of vertices which is not independent, i.e., the set of vertices whose induced subgraph contains at least one edge. The dependence polynomial of $G$ is defined as $D(G, x):=\sum\limits_{k\geqslant 0}d_k(G)x^k$, where $d_k(G)$ is the number of dependent sets of size $k$ in $G$.  Horrocks proved that $D(G, x)$ is log-concave for every graph $G$ [J. Combin. Theory, Ser. B, 84 (2002) 180--185]. In the present paper, we prove that, for a graph $G$, $D(G, x)$ is ultra log-concave if $G$ is $(K_2\cup 2K_1)$-free or contains an independent set of size $|V(G)|-2$, and give the characterization of graphs whose dependence polynomials are real-rooted. Finally, we focus more attention to the problems of log-concavity about independence systems and pose several conjectures closely related the famous Mason's Conjecture.

\setlength{\baselineskip}{17pt}
{} \vskip 0.1in \noindent%
\textbf{Keywords:} Dependence polynomials; Log-concavity; Ultra log-concavity; Real-rootedness.
\end{abstract}
\section{Introduction}
Throughout this paper, except where stated otherwise, all graphs we consider are undirected, finite and simple. 

Let $G$ be a graph with vertex set  $V(G)$ and edge set $E(G)$. 
An {\em independent set} of $G$ is a subset of $V(G)$ whose induced subgraph in $G$ contains no edges. Let $i_k(G)$ be the number of independent sets of size $k$ in $G$. In particular, $i_0(G)=1$. The generating polynomial of $\{i_k(G)\}$,
\begin{equation*}
I(G,x):=\sum\limits_{k\geqslant 0}i_{k}(G)x^k,
\end{equation*}
is called the {\em independence polynomial} of $G$ \cite{gh83}.

If a subset of $V(G)$ is not independent, we call it a {\em dependent set}, i.e., a dependent set is a subset of $V(G)$ whose induced subgraph in $G$ contains at least one edge. Let $d_k(G)$ be the number of dependent sets of size $k$ in $G$. In particular, $d_0(G)=d_1(G)=0$. Similarly, the generating polynomial of $\{d_k(G)\}$,
\begin{equation*}
D(G,x):=\sum\limits_{k\geqslant 0}d_{k}(G)x^k,
\end{equation*}
is called the {\em dependence polynomial} of $G$. 

More generally, a graph polynomial is a graph invariant with values in a polynomial ring, usually a subring of real polynomial ring $\mathds{R}[x]$. Like independence polynomials and dependence polynomials, many of graph polynomials are the generating polynomials for the sequences of numbers of subsets of vertex or edge set with some given properties. A {\em graph property} $\mathcal{A}$ is defined as a family of graphs closed under isomorphism. Let $G$ be a graph. For $S\subseteq V(G)$, $G[S]$ denotes the subgraph induced by $S$. Let $\mathcal{A}$ be a graph property and $a_k(G)$ the number of induced subgraphs of order $k$ of $G$ which are in $\mathcal{A}$. The graph polynomial associated with the graph $G$ and the graph property $\mathcal{A}$, denoted by $P_{\mathcal{A}}(G,x)$, is defined as: $P_{\mathcal{A}}(G,x):=\sum\limits_{S\subseteq V(G),G[S]\in\mathcal{A}}x^{|S|}=\sum\limits_{k=0}^{|V(G)|}a_k(G)x^k$. Correspondingly, when $\mathcal{A}$ consists of all graphs with no edges, $P_{\mathcal{A}}(G,x)$ is the independence polynomial of $G$; when $\mathcal{A}$ consists of all graphs containing at least one edge, $P_{\mathcal{A}}(G,x)$ is the dependence polynomial of $G$. The two more examples are: when $\mathcal{A}$ consists of all complete graphs, $P_{\mathcal{A}}(G,x)$ is the clique polynomial of $G$ \cite{hl94}; when $\mathcal{A}$ consists of all forests, $P_{\mathcal{A}}(G,x)$ is the acyclic polynomial of $G$ \cite{bbp22}.

Let $G$ be a graph and $\mathcal{A}$ a graph property. The {\em complement} of $\mathcal{A}$, denoted by $\bar{\mathcal{A}}$, is the graph property satisfying that $G\in\bar{\mathcal{A}}\Longleftrightarrow G\not\in\mathcal{A}$. About the graph polynomials associated with the graph properties $\mathcal{A}$ and $\bar{\mathcal{A}}$, we have
\begin{pro}\label{pro:Complementary}
For any graph $G$ and graph property $\mathcal{A}$, $P_{\mathcal{A}}(G,x)+P_{\bar{\mathcal{A}}}(G,x)=(x+1)^{|V(G)|}$.
\end{pro}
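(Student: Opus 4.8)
The plan is to use the observation that $P_{\mathcal{A}}(G,x)$ and $P_{\bar{\mathcal{A}}}(G,x)$ are generating polynomials over two families of vertex subsets that together partition the power set of $V(G)$. First I would note that, by the very definition of the complementary property, every graph $H$ satisfies exactly one of $H\in\mathcal{A}$ and $H\in\bar{\mathcal{A}}$; in particular, for each $S\subseteq V(G)$ the induced subgraph $G[S]$ lies in precisely one of $\mathcal{A}$ and $\bar{\mathcal{A}}$. It is also worth recording that $\bar{\mathcal{A}}$ is itself a graph property (closed under isomorphism) whenever $\mathcal{A}$ is, so that $P_{\bar{\mathcal{A}}}(G,x)$ is well defined in the first place.

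With the dichotomy in hand, the two defining summations range over disjoint collections of subsets whose union is all of $2^{V(G)}$, so I would simply add them termwise:
\begin{align*}
P_{\mathcal{A}}(G,x)+P_{\bar{\mathcal{A}}}(G,x)
&=\sum_{\substack{S\subseteq V(G)\\ G[S]\in\mathcal{A}}}x^{|S|}
+\sum_{\substack{S\subseteq V(G)\\ G[S]\in\bar{\mathcal{A}}}}x^{|S|}
=\sum_{S\subseteq V(G)}x^{|S|}.
\end{align*}
The remaining step is to evaluate the right-hand sum by grouping subsets according to their cardinality: there are exactly $\binom{|V(G)|}{k}$ subsets of size $k$, each contributing the monomial $x^{k}$, so the binomial theorem gives
\begin{equation*}
\sum_{S\subseteq V(G)}x^{|S|}=\sum_{k=0}^{|V(G)|}\binom{|V(G)|}{k}x^{k}=(x+1)^{|V(G)|},
\end{equation*}
which is the claimed identity.

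I do not expect any genuine obstacle here: the statement is essentially a counting identity, and the only point requiring care is the verification that the two summation ranges are mutually exclusive and jointly exhaustive, which is immediate from the definition of $\bar{\mathcal{A}}$. The proof is purely formal and does not use any structural hypothesis on $G$ or on $\mathcal{A}$, which is exactly what one wants, since Proposition~\ref{pro:Complementary} will later be applied with $\mathcal{A}$ the property of having no edges (so that $P_{\mathcal{A}}(G,x)=I(G,x)$ and $P_{\bar{\mathcal{A}}}(G,x)=D(G,x)$), yielding the relation $I(G,x)+D(G,x)=(x+1)^{|V(G)|}$ between the independence and dependence polynomials.
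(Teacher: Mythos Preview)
Your proof is correct and follows essentially the same approach as the paper: both arguments rest on the observation that the subsets $S\subseteq V(G)$ with $G[S]\in\mathcal{A}$ and those with $G[S]\in\bar{\mathcal{A}}$ partition $2^{V(G)}$, and then apply the binomial theorem. The only cosmetic difference is that the paper works coefficient-by-coefficient (showing $a_k(G)+\bar{a}_k(G)=\binom{|V(G)|}{k}$ for each $k$) while you sum over all subsets first and group by cardinality afterward.
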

\begin{proof}
For any integer $0\leqslant k\leqslant |V(G)|$, denote $\mathcal{A}_k(G):=\{S\subseteq V(G):G[S]\in\mathcal{A},|S|=k\}$ (resp. $\bar{\mathcal{A}}_k(G):=\{S\subseteq V(G):G[S]\in\bar{\mathcal{A}},|S|=k\}$) and $a_k(G):=|\mathcal{A}_k(G)|$ (resp. $\bar{a}_k(G):=|\bar{\mathcal{A}}_k(G)|$). Since $\mathcal{A}_k(G)$ and $\bar{\mathcal{A}}_k(G)$ are disjoint, $a_k(G)+\bar{a}_k(G)=|\mathcal{A}_k(G)\cup \bar{\mathcal{A}}_k(G)|=|\{S\subseteq V(G):|S|=k\}|={|V(G)| \choose k}$. Combined with $P_{\mathcal{A}}(G,x)=\sum\limits_{k=0}^{|V(G)|}a_k(G)x^k$ and $P_{\bar{\mathcal{A}}}(G,x)=\sum\limits_{k=0}^{|V(G)|}\bar{a}_k(G)x^k$, $P_{\mathcal{A}}(G,x)+P_{\bar{\mathcal{A}}}(G,x)=\sum\limits_{k=0}^{|V(G)|}a_k(G)x^k+\sum\limits_{k=0}^{|V(G)|}\bar{a}_k(G)x^k=\sum\limits_{k=0}^{|V(G)|}{|V(G)| \choose k}x^k=(x+1)^{|V(G)|}$.
\end{proof}

We call the graph property $\mathcal{A}$ {\em hereditary} if it is closed under taking induced subgraphs, i.e., for any graphs $G$, $H$, if $G\in\mathcal{A}$ and $H$ is an induced subgraph of $G$, then $H\in\mathcal{A}$. In contrast, replacing the condition `$H$ is an induced subgraph of $G$' by `$G$ is an induced subgraph of $H$', we say that $\mathcal{A}$ is {\em co-hereditary}. It is obvious that $\mathcal{A}$ is co-hereditary if and only if $\bar{\mathcal{A}}$ is hereditary.

For example, if $\mathcal{A}$ is the graph property of containing at least one edge, then $\bar{\mathcal{A}}$ is the graph property of containing no edges. We can see that $\mathcal{A}$ is co-hereditary and $\bar{\mathcal{A}}$ is hereditary. Moreover, by Proposition \ref{pro:Complementary} and the definitions of dependence and independence polynomials, for any graph $G$, $D(G,x)+I(G,x)=(x+1)^{|V(G)|}$.

Now, we introduce the concepts of unimodality, log-concavity, real-rootedness, and so on. Let $\{a_k\}_{k=0}^m$ be a sequence with real numbers. We define:
\begin{itemize}
\item {\bf Unimodality:} There exists an integer $k$ ($0\leqslant k\leqslant m$) satisfying that $a_0\leqslant a_1\leqslant\cdots\leqslant a_k\geqslant a_{k+1}\geqslant\cdots\geqslant a_m$.
\item {\bf Logarithmically concavity (or log-concavity for short):} $a_k^2\geqslant a_{k-1}a_{k+1}$ for $1\leqslant k\leqslant m-1$.
\item {\bf Ordered log-concavity:} $a_k^2\geqslant \left(1+\frac{1}{k}\right) a_{k-1}a_{k+1}$ for $1\leqslant k\leqslant m-1$ (equivalent to the log-concavity of the sequence $\{k!a_k\}_{k=0}^m$).
\item {\bf Ultra log-concavity:} $a_k^2\geqslant \left(1+\frac{1}{k}\right)\left(1+\frac{1}{m-k}\right) a_{k-1}a_{k+1}$ for $1\leqslant k\leqslant m-1$ (equivalent to the log-concavity of the sequence $\left\{a_k\left/{m\choose k}\right.\right\}_{k=0}^m$).
\item {\bf Real-rootedness:} The polynomial $\sum\limits_{k=0}^m a_kx^k$ has all real roots.
\end{itemize} 
For the non-negative sequences, it is easy to see that these conditions become ever-stronger starting from the second one (see \cite{bg21}). Moreover, adding the condition of having no internal zeros (that is, for any $0\leqslant i<j<k\leqslant m$, $a_i,a_k>0\Longrightarrow a_j>0$), the log-concavity implies the unimodality. A polynomial is called {\em unimodal} (resp. {\em log-concave}, {\em ordered log-concave}, {\em ultra log-concave}) if the sequence of its coefficients is unimodal (resp. log-concave, ordered log-concave, ultra log-concave). 

Let $G$ be a graph and $\mathcal{A}$ a graph property. Recently, Makowsky and Rakita \cite{mr23} obtained the following results about $P_{\mathcal{A}}(G,x)$:
\begin{enumerate}
\renewcommand{\labelenumi}{\arabic{enumi}.}
\item If $\mathcal{A}$ is co-hereditary, $P_{\mathcal{A}}(G,x)$ is unimodal for almost every graph $G\in\mathcal{G}(n,p)$ for every edge probability $p\in (0,1)$.
\item If $\mathcal{A}$ is hereditary, except $P_{\mathcal{A}}(G,x)$ is independence or clique polynomial, $P_{\mathcal{A}}(G,x)$ is real-rooted if and only if $G\in\mathcal{A}$.
\end{enumerate}

It is a natural problem what we can say about the log-concavity or the real-rootedness of $P_{\mathcal{A}}(G,x)$ when $\mathcal{A}$ is co-hereditary. In this paper, we consider this problem on dependence polynomials. As mentioned above, dependence polynomials and independence polynomials are complementary, but for the problem of unimodality, log-concavity or real-rootedness, the independence polynomials are more difficult than the dependence polynomials. The independence polynomials are not necessarily log-concave or unimodal \cite{amse87}. Even for bipartite graphs \cite{bk13} and well-covered graphs \cite{mt03}, there are examples that the independence polynomials are not unimodal. More recently, the examples of trees with non-log-concave independence polynomials were constructed \cite{kl23,klym23}. For the real-rootedness, only very few independence polynomials of graph classes can be proved having only real roots, like claw-free graphs \cite{cs07}. In contrast, for dependence polynomials, Horrocks \cite{h02} showed that $D(G,x)$ is log-concave for every graph $G$.  However, the stronger properties of dependence polynomials, like ordered log-concavity, ultra log-concavity and real-rootedness,  have never been studied.

If $G$ contains no edges, there are no dependent sets in $G$, i.e., $D(G,x)\equiv 0$. Thus, we only consider the graphs which contains at least one edge in what follows. 

We denote the {\em complete graph}, the {\em complete bipartite graph}, the {\em path} and the {\em cycle} by $K_n$, $K_{m,n}$, $P_n$ and $C_n$, respectively. In particular, $K_{1,n}$ is called a {\em star}. Let $G$ and $H$ be two disjoint graphs. The union $G\cup H$ is the graph with the vertex set $V(G\cup H)=V(G)\cup V(H)$ and the edge set $E(G\cup H)=E(G)\cup E(H)$. For a graph $G$ and $n\in\mathds{N}^*$, $nG$ denotes the disjoint union of $n$ copies of $G$. For a graph $H$, a graph $G$ is called {\em $H$-free} if no induced subgraphs of $G$ are isomorphic to $H$. About the ultra log-concavity of dependence polynomials, we have  
\begin{thm}\label{thm:ULC}
Let $G$ be a graph of order $n$ containing at least one edge. If $G$ is a $K_2\cup 2K_1$-free graph or has an independent set of size $n-2$, then the dependence polynomial $D(G,x)$ is ultra log-concave.
\end{thm}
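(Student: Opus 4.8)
The plan is to translate ultra log-concavity into an assertion about the normalized coefficients. By the complementary relation $D(G,x)+I(G,x)=(x+1)^n$ we have $d_k=\binom{n}{k}-i_k$, and since $G$ has an edge the whole vertex set is a dependent set, so $\deg D(G,x)=n$. Thus, by the definition of ultra log-concavity, it suffices to prove that the sequence $\{d_k/\binom{n}{k}\}_{k=0}^{n}=\{1-i_k/\binom{n}{k}\}_{k=0}^{n}$ is log-concave. Writing $r_k:=d_k/\binom{n}{k}$, note that $r_0=r_1=0$ while $r_k>0$ for $2\le k\le n$ (an edge together with any $k-2$ further vertices is a dependent $k$-set); hence the inequalities $r_k^2\ge r_{k-1}r_{k+1}$ are automatic for $k\le 2$, and the real content lies in the range $3\le k\le n-1$.

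First I would treat the case where $G$ is $(K_2\cup 2K_1)$-free. The key structural observation is that $G$ is $(K_2\cup 2K_1)$-free if and only if its complement $\overline{G}$ is $(K_4-e)$-free (diamond-free), equivalently, the maximal independent sets of $G$ pairwise meet in at most one vertex. Consequently, for every $k\ge 2$ each independent $k$-set is contained in a unique maximal independent set, so $i_k=\sum_{j}\binom{\alpha_j}{k}$, where $\alpha_1,\dots,\alpha_t$ are the sizes of the maximal independent sets of $G$. Setting $g_j(k):=\binom{\alpha_j}{k}/\binom{n}{k}=\prod_{i=0}^{k-1}\frac{\alpha_j-i}{n-i}$, this gives $r_k=1-\sum_j g_j(k)$ for $k\ge 2$, so the target reduces to showing that $1-\sum_j g_j$ is log-concave. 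A direct computation with $h_k:=\sum_j g_j(k)$ gives
\[
r_k^2-r_{k-1}r_{k+1}=\Big(\textstyle\sum_j\big(g_j(k-1)+g_j(k+1)-2g_j(k)\big)\Big)-\big(h_{k-1}h_{k+1}-h_k^2\big).
\]
Each $g_j$ is both convex and log-concave in $k$ (its ratios $g_j(k)/g_j(k-1)=(\alpha_j-k+1)/(n-k+1)$ are nonincreasing and at most $1$), so the first bracket is nonnegative; the whole difficulty is that $h=\sum_j g_j$, being only a sum of log-concave sequences, need not itself be log-concave, so the second bracket may be positive. I would therefore prove that the convexity surplus always dominates the log-concavity defect, by expanding $h_{k-1}h_{k+1}-h_k^2$ over pairs $(i,j)$, discarding the nonpositive diagonal terms, and bounding each cross term $g_i(k-1)g_j(k+1)+g_j(k-1)g_i(k+1)-2g_i(k)g_j(k)$ against $\sum_j\big(g_j(k-1)+g_j(k+1)-2g_j(k)\big)$ using the explicit hypergeometric ratios together with the constraint $\sum_j\binom{\alpha_j}{2}=|E(\overline{G})|\le\binom{n}{2}$ forced by the pairwise-intersection property. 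This pair-by-pair estimate is the main obstacle.

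For the second case, where $G$ has an independent set $W$ of size $n-2$, the two remaining vertices $u,v$ form a vertex cover, and the structure is rigid enough for a direct computation. Partitioning $W$ according to adjacency to $u$ and to $v$ and recording whether $uv\in E(G)$, one obtains the explicit form $I(G,x)=(1+x)^{n-2}+x(1+x)^{a}+x(1+x)^{b}+(1-\epsilon)x^2(1+x)^{c}$ for suitable nonnegative integers $a,b,c\le n-2$ and $\epsilon\in\{0,1\}$, hence an explicit formula for $r_k=1-i_k/\binom{n}{k}$ as $1$ minus a sum of four hypergeometric-type terms. I would then verify $r_k^2\ge r_{k-1}r_{k+1}$ directly; because only finitely many summands of a controlled shape occur, this becomes a concrete algebraic inequality in the parameters $n,a,b,c,\epsilon$. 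Here the expected obstacle is again the same phenomenon as in Case~1, namely ensuring that subtracting these individually log-concave terms from $1$ preserves log-concavity, but with so few terms it should be tractable by the same convexity-versus-defect bookkeeping, specialized to the four explicit summands.
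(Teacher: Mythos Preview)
Your route is quite different from the paper's. The paper never separates the two hypotheses into two arguments. Instead it passes to co-dependent sets $\bar d_k=d_{n-k}$, sets up the multilinear polynomial $g_G(y,z_1,\dots,z_n)=\sum_{S\in\overline{\mathcal D}}y^{n-|S|}\prod_{i\in S}z_i$, and shows that for each $1\le k\le n-3$ the $2\times2$ Hessian of the quadratic $q_k=\partial_y^{n-k-1}\partial_z^{k-1}f_G$ has nonpositive determinant. This is done by writing $\nabla^2q_k$ as $(n-k-1)!\sum_{S\in\overline{\mathcal D}_{k-1}}Q_S$, classifying the possible $2\times2$ matrices $Q_S$ into eight types $H_1,\dots,H_8$ according to the local structure of $G-S$, and computing the resulting quadratic form $\mathbf t^{\mathsf T}A\mathbf t$ in the multiplicities $t_i$. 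All entries of $A$ are nonpositive except the $t_1t_8$ entry; the hypothesis ``$K_2\cup2K_1$-free'' forces $t_1=0$ (no $S$ with $G-S\cong K_2\cup(n-k-1)K_1$), while ``independent set of size $n-2$'' forces $t_8=0$ (every $G-S$ has a co-independent pair). So both hypotheses enter at exactly the same pinch point.

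Your structural observation for the first case is correct and attractive: diamond-freeness of $\overline G$ does give $i_k=\sum_j\binom{\alpha_j}{k}$ for $k\ge2$, and your computation of $r_k^2-r_{k-1}r_{k+1}=(h_{k-1}+h_{k+1}-2h_k)-(h_{k-1}h_{k+1}-h_k^2)$ together with the convexity of each $g_j$ is sound. But the proposal has a genuine gap at precisely the point you flag as ``the main obstacle'': you do not prove that the convexity surplus dominates the log-concavity defect, and the suggested mechanism---bounding cross terms $g_i(k-1)g_j(k+1)+g_j(k-1)g_i(k+1)-2g_i(k)g_j(k)$ pair by pair using only $\sum_j\binom{\alpha_j}{2}\le\binom{n}{2}$---is not shown to close. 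That single scalar constraint is far weaker than what the linear-system structure actually gives you (for instance $h_k\le1$ for \emph{every} $k\ge2$, and the geometric packing of the maximal independent sets in $[n]$), and nothing in the write-up explains why pairwise control suffices or how the constraint is deployed. As written this step is a restatement of the difficulty rather than a resolution. The second case has the same defect: your formula for $I(G,x)$ is right (and is essentially what the paper derives for its real-rootedness theorem), but ``verify $r_k^2\ge r_{k-1}r_{k+1}$ directly'' with three free parameters and up to four hypergeometric summands is a nontrivial computation that you have not carried out, and appealing to ``the same convexity-versus-defect bookkeeping'' inherits the gap from Case~1.
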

\begin{rem}
The complement of $K_2\cup 2K_1$ is $K_4-e$, which is called a {\em diamond}, so the $K_2\cup 2K_1$-free graphs are equivalent to the complements of the diamond-free graphs. 
\end{rem}

Let $G_1,G_2$ be the two graphs illustrated in Fig. \ref{fig:G1G2}. We give the characterization of graphs with real-rooted dependence polynomials as follows:
\begin{thm}\label{thm:RR}
Let $G$ be a graph of order $n$ which contains at least one edge. The dependence polynomial $D(G,x)$ is real-rooted if and only if $G\in\{K_2\cup (n-2)K_1,P_3\cup (n-3)K_1,K_3\cup (n-3)K_1,2K_2\cup (n-4)K_1,P_4\cup (n-4)K_1,C_4\cup (n-4)K_1,G_1\cup (n-4)K_1,G_2\cup (n-5)K_1\}$.
\end{thm}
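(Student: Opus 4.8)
Since $d_0(G)=d_1(G)=0$ and $d_2(G)=|E(G)|>0$, the polynomial $D(G,x)$ always has $x=0$ as a root of multiplicity exactly two; writing $D(G,x)=x^2Q(x)$ with $\deg Q=n-2$, real-rootedness of $D(G,x)$ is equivalent to real-rootedness of $Q$, and since $D$ has non-negative coefficients every real root of $Q$ is non-positive. My first step is a reduction to graphs without isolated vertices. Because an independent set of $G\cup K_1$ is an independent set of $G$ together with an arbitrary choice of the extra vertex, $I(G\cup K_1,x)=(1+x)I(G,x)$, whence $D(G\cup K_1,x)=(1+x)D(G,x)$ by Proposition \ref{pro:Complementary}. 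Thus adjoining isolated vertices merely multiplies $D$ by powers of $(1+x)$, which cannot affect real-rootedness, so it suffices to classify the graphs with no isolated vertices (the ``cores'') whose dependence polynomial is real-rooted; each entry of the list is exactly such a core together with the prescribed isolated vertices. For the sufficiency direction I would simply compute $D$ for the eight cores and read off the factorizations, e.g. $D(P_4,x)=x^2(x+1)(x+3)$, $D(C_4,x)=x^2(x+2)^2=D(G_1,x)$, and $D(G_2,x)=x^2(x+1)(x+2)^2$, exhibiting real roots in each case.

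The substance is necessity. The plan is to first show that real-rootedness forces a very restrictive structure, namely a vertex cover of size at most $2$ (equivalently $\alpha(G)\ge n-2$), and only then to classify within that family. Write $Q(x)=\prod_{i=1}^{n-2}(x+r_i)$ with all $r_i\ge 0$ real; then the elementary symmetric functions $e_k(r)$ obey Newton's inequalities. Since $d_n(G)=1$, one has $e_1=d_{n-1}(G)$ and $e_2=d_{n-2}(G)$. If $\alpha(G)\le n-3$, then $i_{n-1}(G)=i_{n-2}(G)=0$, so $d_{n-1}=n$ and $d_{n-2}=\binom{n}{2}$, and the Newton inequality $e_1^2/(n-2)^2\ge e_2\big/\binom{n-2}{2}$ reduces after simplification to $0\ge 2$, a contradiction. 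Hence $\alpha(G)\ge n-2$; that is, all edges meet a fixed pair $\{u,v\}$. (Note this is precisely the second hypothesis of Theorem \ref{thm:ULC}.)

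Such a core is determined by how the remaining independent set of $n-2$ vertices attaches to $u,v$: say $a$ of them see $u$ only, $b$ see $v$ only, $c$ see both (with $a+b+c=n-2$ and no isolated vertices), and $\epsilon\in\{0,1\}$ records whether $uv\in E(G)$. A direct count of independent sets gives the closed form
\[
D(G,x)=x\big[(x+2)(1+x)^{n-2}-(1+x)^a-(1+x)^b-(1-\epsilon)x\big].
\]
To bound the parameters I would again use Newton together with AM--GM: from $\prod r_i=d_2(G)=|E(G)|$ and $\sum r_i=d_{n-1}(G)\le n$ one gets $|E(G)|\le\big(\tfrac{n}{n-2}\big)^{n-2}<e^2<8$, so $|E(G)|\le 7$; since $|E(G)|\ge n-2$ in this family, the order satisfies $n\le 9$. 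A second Newton inequality comparing $d_n,d_{n-1},d_{n-2}$, using $i_{n-2}(G)=1+\binom{a}{n-3}+\binom{b}{n-3}$ when $n\ge5$, forces $\max(a,b)\ge n-3$, so all but at most one outer vertex attaches to a single cover vertex. This leaves only near-star configurations: the stars $K_{1,n-1}$, for which the clean identity $D(K_{1,n-1},x)=x\big[(1+x)^{n-1}-1\big]$ shows the roots become non-real as soon as $n\ge4$, and a small number of one-parameter families obtained by moving one vertex or toggling $\epsilon$. For each family the bound $|E(G)|\le 7$ leaves only finitely many orders, and I would check the explicit formula above directly, finding that real-rootedness survives exactly at the eight listed cores (the genuinely new one being $G_2$ at $n=5$).

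The main obstacle is this final classification inside the vertex-cover-two family: one must rule out real-rootedness for every configuration except the eight. The cleanest route I see is to let Newton's inequalities perform the structural pruning (forcing the near-star shape together with the edge bound $|E|\le 7$), so that only finitely many explicit polynomials remain to be tested, rather than attempting a single uniform factorization or root-counting argument valid for the whole family at once.
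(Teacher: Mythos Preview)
Your proposal is correct and follows a genuinely different route from the paper's proof.

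For the key structural step $\alpha(G)\ge n-2$, the paper invokes Sturm's Theorem: it computes the first three terms $P_0,P_1,P_2$ of the Sturm sequence of $D(G,x)$ and shows that if $\alpha(G)\le n-3$ then $d_1-d_2\ge 2$, violating the Sturm criterion. Your Newton-inequality argument is more elementary and self-contained: writing $D(G,x)=x^2\prod_{i=1}^{n-2}(x+r_i)$ and reading off $e_1=d_{n-1}=n$, $e_2=d_{n-2}=\binom{n}{2}$ when $\alpha\le n-3$, the first Newton inequality $p_1^2\ge p_0p_2$ collapses to $0\ge 2$. Both arguments are short; yours avoids importing Sturm.

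The parametrizations differ but are equivalent: the paper keeps isolated vertices and sets $A=N(u)\cap X$, $B=N(v)\cap X$, $C=A\cup B$, then substitutes $x\mapsto x-1$ to obtain explicit polynomials $Q_1,Q_2$ with $\{0,\pm1\}$-coefficients, whose real-rootedness is analyzed via the lemma ``three consecutive equal positive coefficients $\Rightarrow$ not real-rooted'' together with ad hoc log-concavity checks. Your route instead strips isolated vertices, uses AM--GM on $\prod r_i=|E(G)|$ and $\sum r_i=d_{n-1}\le n$ to get the uniform bound $|E(G)|<e^2$, hence $n\le 9$, and re-applies Newton at the top (using $i_{n-2}=1+\binom{a}{n-3}+\binom{b}{n-3}$, which I verified) to force the near-star shape $\max(a,b)\ge n-3$. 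This leaves a strictly finite list to test against your closed form $D(G,x)=x\bigl[(x+2)(1+x)^{n-2}-(1+x)^a-(1+x)^b-(1-\epsilon)x\bigr]$, which is correct.

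What each buys: the paper's $x\mapsto x-1$ trick yields polynomials so sparse that the case analysis, though longer, is entirely by hand with no numerical bounds; your approach front-loads the pruning via classical inequalities, giving the pleasant byproducts $|E|\le 7$ and $n\le 9$ before any casework. Both terminate in a finite check of comparable size.
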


\begin{figure}[!htbp]
\centering
\scalebox{0.5}[0.5]{\includegraphics{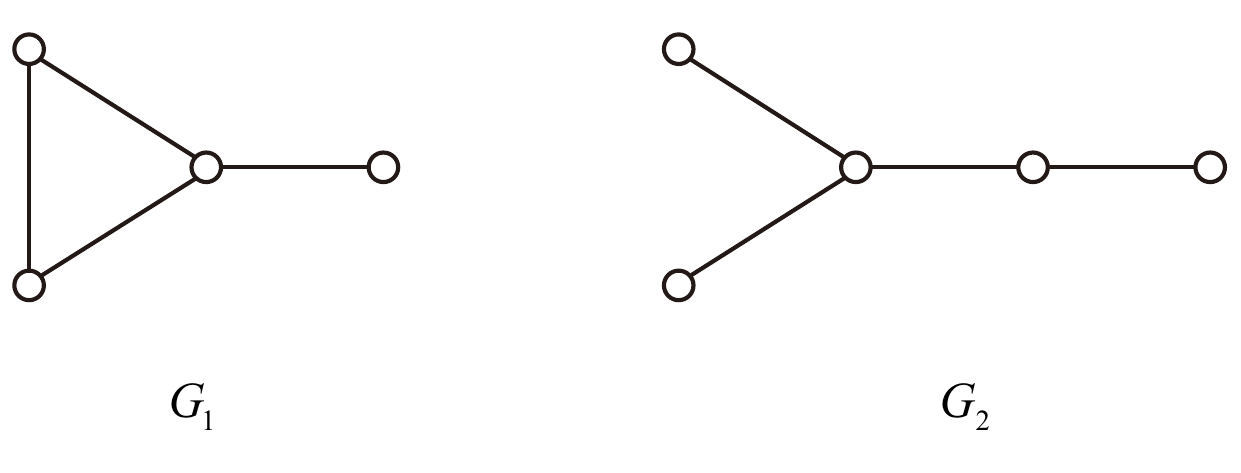}}
\caption{$G_1$ and $G_2$.}{\label{fig:G1G2}}
\end{figure}

For the graphs without isolated vertices, we have the following corollary:
\begin{cor}\label{cor:GnoIsolatedVertices}
Let $G$ be a graph without isolated vertices. The dependence polynomial $D(G,x)$ is real-rooted if and only if $G\in\{K_2,P_3,K_3,2K_2,P_4,C_4,G_1,G_2\}$.
\end{cor}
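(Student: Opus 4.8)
The plan is to derive Corollary~\ref{cor:GnoIsolatedVertices} directly from Theorem~\ref{thm:RR} by analysing how isolated vertices interact with the dependence polynomial. First I would record the basic observation that adding an isolated vertex to a graph has a predictable effect on dependent sets: if $G' = G \cup K_1$, then a subset $S \subseteq V(G')$ is dependent if and only if $S \cap V(G)$ is dependent, because the isolated vertex contributes no edges. Counting by whether the new vertex is included yields the recursion $d_k(G') = d_k(G) + d_{k-1}(G)$, and hence $D(G \cup K_1, x) = (1+x)\,D(G,x)$. Iterating, $D(G \cup rK_1, x) = (1+x)^r D(G,x)$ for every $r \geqslant 0$.

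The key consequence is that multiplying by $(1+x)^r$ only introduces the real root $x=-1$ with multiplicity $r$ and leaves the remaining roots unchanged. Therefore $D(G \cup rK_1, x)$ is real-rooted if and only if $D(G,x)$ is real-rooted. Now let $H$ be any graph without isolated vertices and containing at least one edge, of order $m$. For each $n \geqslant m$ the graph $H \cup (n-m)K_1$ has order $n$, and by the above its dependence polynomial is real-rooted precisely when $D(H,x)$ is. Comparing with the list in Theorem~\ref{thm:RR}, I would note that each graph appearing there is exactly a member of the set $\{K_2, P_3, K_3, 2K_2, P_4, C_4, G_1, G_2\}$ together with some number of isolated vertices; indeed the eight ``cores'' $K_2, P_3, K_3, 2K_2, P_4, C_4, G_1, G_2$ are precisely the isolated-vertex-free graphs whose dependence polynomials are real-rooted.

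To finish, I would argue both directions. For sufficiency, if $H \in \{K_2, P_3, K_3, 2K_2, P_4, C_4, G_1, G_2\}$, then taking $n = |V(H)|$ in Theorem~\ref{thm:RR} shows $D(H,x)$ is real-rooted. For necessity, suppose $H$ has no isolated vertices, contains an edge, and $D(H,x)$ is real-rooted; writing $n=|V(H)|$, the graph $H$ itself (with zero isolated vertices) must appear in the list of Theorem~\ref{thm:RR}, and among the entries of that list the only ones possessing no isolated vertices are exactly the eight named graphs (since every other entry is a core together with a strictly positive number of copies of $K_1$). The only genuine point to verify is this bookkeeping: that stripping all isolated vertices from each listed graph leaves one of the eight cores, and conversely that none of the eight cores secretly contains an isolated vertex. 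I expect this final matching step to be the main (though routine) obstacle, since it requires carefully reading off the structure of each family $X \cup (n-|V(X)|)K_1$ in Theorem~\ref{thm:RR} and confirming that the isolated-vertex-free representatives are precisely the claimed eight graphs.
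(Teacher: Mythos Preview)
Your proposal is correct and is exactly the argument the paper has in mind: the paper states Corollary~\ref{cor:GnoIsolatedVertices} without proof as an immediate consequence of Theorem~\ref{thm:RR}, and the identity $D(G\cup mK_1,x)=(1+x)^mD(G,x)$ you rederive is precisely Lemma~\ref{lem:IsolatedVertices}. The remaining bookkeeping step you describe is routine and matches what the paper implicitly assumes.
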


The paper is organized as follows. We will prove Theorem \ref{thm:ULC} in Section 2 and Theorem \ref{thm:RR} in Section 3. Finally, in Section 4, we conclude the paper and give some discussion on the log-concave problems of independence systems, which generalize Mason's Conjecture.
\section{Proof of Theorem \ref{thm:ULC}}
Let $G$ be a graph of order $n$. For $S\subseteq V(G)$, let's denote $G-S:=G[V(G)\setminus S]$. $S$ is called a {\em co-dependent set} of $G$ if $V(G)\setminus S$ is a dependent set of $G$. The number of co-dependent sets of size $k$ in $G$ is denoted by $\bar{d}_k(G)$. It is obvious that $\bar{d}_k(G)=d_{n-k}(G)$ and $\bar{d}_n(G)=\bar{d}_{n-1}(G)=0$. By the definition of ultra log-concavity,  $D(G,x)$ is ultra log-concave if and only if the sequence $\{\bar{d}_k(G)\}_{k=0}^n$ is ultra log-concave. Thus, we consider the co-dependent sets of $G$ in what follows. We will write $\bar{d}_k(G)$ as $\bar{d}_k$ shortly without causing confusion.

Let $\overline{\mathcal{D}}(G)$ ($\overline{\mathcal{D}}$ without causing confusion) be the family of co-dependent sets in $G$. We can see that $(V(G),\overline{\mathcal{D}})$ is an {\em independence system} (or a {\em simplicial complex} in a view of geometry), i.e., for any $S,T\subseteq V(G)$, if $S\subseteq T$ and $T\in\overline{\mathcal{D}}$, then $S\in\overline{\mathcal{D}}$. And further, co-dependent sets satisfy the following property:
\begin{obs}\label{obs:ContractionProperty}
Let $S$ be a co-dependent set of a graph $G$ and $T$ a subset of $V(G)\setminus S$. $T$ is a co-dependent set of $G-S$ if and only if $S\cup T$ is a co-dependent set of $G$.
\end{obs}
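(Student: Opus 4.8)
The plan is to show that both sides of the claimed equivalence reduce, after unfolding the definition of co-dependent set, to one and the same assertion about the edges of a single induced subgraph of $G$. Recall that a set is co-dependent in a graph precisely when its complement, taken within that graph's vertex set, induces a subgraph containing at least one edge. Hence the entire argument amounts to tracking which complement and which induced subgraph each side refers to, and verifying that they coincide.

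First I would record the relevant set-theoretic identity. Since $T\subseteq V(G)\setminus S$, the sets $S$ and $T$ are disjoint, and the complement of $T$ inside $V(G-S)=V(G)\setminus S$ is exactly $(V(G)\setminus S)\setminus T=V(G)\setminus(S\cup T)$. Thus the ``witness'' vertex set on both sides is the common set $W:=V(G)\setminus(S\cup T)$, which is simultaneously the complement of $T$ in $G-S$ and the complement of $S\cup T$ in $G$.

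Next I would invoke the compatibility of the induced-subgraph operation: for any $W\subseteq V(G)\setminus S$ one has $(G-S)[W]=G[W]$, since taking an induced subgraph of an induced subgraph simply restricts to the intersection of the two vertex sets. Applying this with $W=V(G)\setminus(S\cup T)$ yields $(G-S)[W]=G[W]$.

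Finally I would combine the two observations. The statement that $T$ is a co-dependent set of $G-S$ unfolds to ``$(G-S)[W]$ contains at least one edge,'' while the statement that $S\cup T$ is a co-dependent set of $G$ unfolds to ``$G[W]$ contains at least one edge''; by the previous step these induced subgraphs are literally equal, so the two conditions are equivalent, giving both directions at once. Because the proof is a chain of equalities of sets and of induced subgraphs, I do not expect a genuine obstacle; the only point requiring care is the bookkeeping that $S$ and $T$ are disjoint so that the two complements agree. Notably, the hypothesis that $S$ itself is co-dependent is not actually needed for the equivalence — it merely situates the observation within the context in which it will subsequently be applied.
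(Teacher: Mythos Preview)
Your proof is correct. The paper in fact offers no proof at all for this observation --- it is stated and immediately used --- so your unfolding of the definitions, identifying $V(G)\setminus(S\cup T)$ as the common complement and noting $(G-S)[W]=G[W]$, is exactly the verification the paper leaves implicit. Your remark that the hypothesis ``$S$ is co-dependent'' is not needed for the equivalence is also accurate.
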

The main idea of the proof of Theorem \ref{thm:ULC} is analogous to that presented in \cite{algv18}. Define the bivariate function:
\begin{equation*}
f_G(y,z):=\sum_{k=0}^{n-2}\bar{d}_ky^{n-k}z^k.
\end{equation*}

Let $\partial_y$ (resp. $\partial_z$, $\partial_{z_i}$, etc.) denote the partial derivative operator that maps a multivariate polynomial to its partial derivative with respect to $y$ (resp. $z$, $z_i$, etc.). Denote the quadratic form
\begin{eqnarray}
q_k(y,z)&:=&\frac{\partial_y^{n-k-1}\partial_z^{k-1}f_G}{(k-1)!}\nonumber\\
&=&\frac{(n-k+1)!}{2}\bar{d}_{k-1}y^2+(n-k)!k\bar{d}_kyz+\frac{(n-k-1)!k(k+1)}{2}\bar{d}_{k+1}z^2\label{eq:qk}
\end{eqnarray}
for $1\leqslant k\leqslant n-3$.

For a function $f\in\mathds{R}[z_1,z_2,\cdots,z_n]$, the {\em Hessian matrix} (or shortly, {\em Hessian}), denoted by $\nabla^2f$, is an $n\times n$ symmetric matrix whose the entry of the $i$-th row and the $j$-th column is $(\nabla^2f)_{i,j}:=\partial_{z_i}\partial_{z_j}f$. In particular, if $f$ is a quadratic form, then its matrix representation is $f=\frac{1}{2}\mathbf{z}^{\mathsf{T}}\nabla^2f\mathbf{z}$, where $\mathbf{z}:=(z_1,z_2,\cdots,z_n)$.  Now, we prove the following lemma.
\begin{lem}\label{lem:Hessian<=0}
Let $G$ be a graph of order $n$ containing at least one edge. If $G$ is a $K_2\cup 2K_1$-free graph or has an independent set of size $n-2$, then the determinant of the Hessian $\det(\nabla^2q_k)\leqslant 0$ for $1\leqslant k\leqslant n-3$.
\end{lem}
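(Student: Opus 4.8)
The plan is to first make the Hessian completely explicit and then reduce the lemma to a single scalar inequality that can be attacked combinatorially. Since $q_k$ is a binary quadratic form in $(y,z)$, reading the coefficients off \eqref{eq:qk} gives the symmetric $2\times 2$ matrix
$$\nabla^2 q_k=\begin{pmatrix}(n-k+1)!\,\bar{d}_{k-1} & (n-k)!\,k\,\bar{d}_k\\[2pt](n-k)!\,k\,\bar{d}_k & (n-k-1)!\,k(k+1)\,\bar{d}_{k+1}\end{pmatrix}.$$
Expanding the determinant and pulling out the factor $[(n-k-1)!]^2(n-k)k$, which is positive since $1\leqslant k\leqslant n-3$, yields
$$\det(\nabla^2 q_k)=[(n-k-1)!]^2(n-k)k\Big[(n-k+1)(k+1)\,\bar{d}_{k-1}\bar{d}_{k+1}-(n-k)k\,\bar{d}_k^2\Big].$$
Hence $\det(\nabla^2 q_k)\leqslant 0$ is equivalent to $(n-k)k\,\bar{d}_k^2\geqslant (n-k+1)(k+1)\,\bar{d}_{k-1}\bar{d}_{k+1}$, that is, to $\bar{d}_k^2\geqslant\big(1+\tfrac1k\big)\big(1+\tfrac1{n-k}\big)\bar{d}_{k-1}\bar{d}_{k+1}$, which is exactly the ultra-log-concavity inequality for $\{\bar{d}_k\}$ (the cases $k\geqslant n-2$ are excluded precisely because $\bar{d}_{k+1}=d_{n-k-1}=0$ makes them trivial). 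So the entire content of the lemma is this one inequality, which I would establish separately under each hypothesis.

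For the case where $G$ has an independent set $I$ of size $n-2$, I would compute $\bar{d}_k$ (equivalently $d_{n-k}$) in closed form. First I would discard isolated vertices: re-adding an isolated vertex multiplies $D(G,x)$ by $(1+x)$, and since ultra-log-concavity is the log-concavity of $\{\bar{d}_k/\binom{n}{k}\}$ and is preserved under this operation (the degree-$n$ homogenization is multiplied by the linear factor $y+x$), it suffices to treat the isolated-vertex-free members of the class; moreover we may choose $I$ to contain every isolated vertex, so the reduced graph still has an independent set of size (its order)$-2$. For the remaining graphs the two vertices $u,v\notin I$ dominate $I$, so every independent set is of one of four types (a subset of $I$; $u$ together with a subset of $I\setminus N(u)$; $v$ together with a subset of $I\setminus N(v)$; or $\{u,v\}$ when $uv\notin E$). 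This expresses $I(G,x)$ as a short sum of powers of $(1+x)$, hence $D(G,x)=(1+x)^n-I(G,x)$ in closed form, and the desired inequality follows by a direct, if slightly tedious, verification.

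The $K_2\cup 2K_1$-free case is where I expect the real difficulty, and I would work with the dependent sets themselves. Writing $j=n-k$, the target becomes $d_j^2\geqslant\big(1+\tfrac1j\big)\big(1+\tfrac1{n-j}\big)d_{j-1}d_{j+1}$, which I would prove by an injection from (suitably marked) pairs consisting of a dependent $(j-1)$-set $A$ and a dependent $(j+1)$-set $B$ into (marked) pairs of dependent $j$-sets. The naive map $(A,B)\mapsto(A\cup\{a\},\,B\setminus\{b\})$ keeps the first set dependent automatically, and its only failure is that $B\setminus\{b\}$ may become independent, which happens exactly when $b$ covers every edge of $G[B]$. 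The structural hypothesis enters precisely here: for $K_2\cup 2K_1$-free graphs, for each edge $xy$ the common non-neighbours of $x$ and $y$ induce a clique, and I would use this to bound the number of such degenerate configurations, to reroute them, and to absorb the discrepancy between the coefficients $(j+1)(n-j+1)$ and $(n-j)j$ on the two sides. Constructing this injection cleanly, or equivalently proving the corresponding counting estimate for the ``bad'' sets via the clique structure, is the main obstacle; here Observation \ref{obs:ContractionProperty} should help, since it shows that the links of co-dependent sets are again co-dependence systems and thus supports an inductive form of the argument.
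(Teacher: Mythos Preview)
Your reduction of $\det(\nabla^2 q_k)\le 0$ to the inequality $\bar d_k^{\,2}\ge\bigl(1+\tfrac1k\bigr)\bigl(1+\tfrac1{n-k}\bigr)\bar d_{k-1}\bar d_{k+1}$ is correct, and this is exactly how the paper passes from Lemma~\ref{lem:Hessian<=0} to Theorem~\ref{thm:ULC}. But from that point on your proposal is not a proof. In the independent-set case you defer everything to a ``slightly tedious verification'' in the parameters $a,b,c$ that is never carried out; and in the $K_2\cup 2K_1$-free case you explicitly name the construction of the injection as ``the main obstacle'' and leave it open. The structural observation you isolate (common non-neighbours of an edge induce a clique) is correct, but you give no indication of how to reroute the degenerate pairs or to absorb the discrepancy between $(j+1)(n-j+1)$ and $j(n-j)$, and it is not at all clear that an injection of the kind you describe exists. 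As written, the $K_2\cup 2K_1$-free half is a plan rather than an argument.

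The paper's proof is entirely different and handles the two hypotheses in a single stroke, in the style of the Anari--Liu--Oveis~Gharan--Vinzant proof of Mason's conjecture. One introduces the polarisation $g_G(y,z_1,\dots,z_n)=\sum_{S\in\overline{\mathcal D}}y^{\,n-|S|}\prod_{i\in S}z_i$ and shows that $q_k(y,z)=\sum_{S\in\overline{\mathcal D}_{k-1}}q_k^S(y,z,\dots,z)$ with $q_k^S=\partial_y^{\,n-k-1}\partial^S g_G$, so that $\nabla^2 q_k=(n-k-1)!\sum_S Q_S$ for explicit $2\times 2$ matrices $Q_S$ depending only on the local graph $G-S$. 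A short case analysis of which vertices and pairs in $G-S$ are co-independent shows that every $Q_S$ equals one of eight matrices $H_1,\dots,H_8$; writing $t_i$ for the number of $S$ with $Q_S=H_i$, one gets $\det(\nabla^2 q_k)=[(n-k-1)!]^2\,\mathbf t^{\mathsf T}A\,\mathbf t$ for an explicit $8\times8$ matrix $A$ whose entries are all $\le 0$ except $A_{18}=A_{81}>0$. The two hypotheses then enter symmetrically: $K_2\cup 2K_1$-freeness forces $t_1=0$ (no $S$ with $G-S\cong K_2\cup(n-k-1)K_1$), while an independent set of size $n-2$ forces $t_8=0$ (every $G-S$ retains a co-independent pair). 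Either way the unique positive contribution vanishes and the quadratic form is nonpositive. This uniform mechanism is the missing idea in your proposal; it replaces both the parametric computation and the unspecified injection.
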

\begin{proof}
 W.l.o.g., denote $V(G)=[n]=\{1,2,\cdots,n\}$. We define the homogeneous polynomial $g_G\in\mathds{R}[y,z_1,z_2,\cdots,z_n]$ as follows:
\begin{equation*}
g_G(y,z_1,z_2,\cdots,z_n):=\sum_{S\in\overline{\mathcal{D}}}y^{n-|S|}\prod_{i\in S}z_i.
\end{equation*}

For $S\subseteq V(G)$, denote the differential operator $\partial^S:=\prod\limits_{i\in S}\partial_{z_i}$. Let $S$ be a co-dependent set of size $k-1$ in $G$. Denote $V(G)\setminus S:=\{l_1,l_2,\cdots,l_{n-k+1}\}$. We consider $q_k^S:=\partial_y^{n-k-1}\partial^Sg_G$, which is a quadratic form in variables $y,z_{l_1},z_{l_2},\cdots,z_{l_{n-k+1}}$.  Let $\overline{\mathcal{D}}_i(G)$ denote the family of the co-dependent sets of size $i$ in $G$. By Observation \ref{obs:ContractionProperty}, $S'\supset S$ and $S'\in\overline{\mathcal{D}}_k(G)$ (resp. $S'\in\overline{\mathcal{D}}_{k+1}(G)$) if and only if $S'\setminus S\in\overline{\mathcal{D}}_1(G-S)$ (resp. $S'\setminus S\in\overline{\mathcal{D}}_2(G-S)$). Then
\begin{eqnarray}
q_k^S(y,z_{l_1},z_{l_2},\cdots,z_{l_{n-k+1}})&:=&\partial_y^{n-k-1}\partial^Sg_G\nonumber\\[6pt]
&=&\frac{(n-k+1)!}{2}y^2+(n-k)!\sum_{\{i\}\in\overline{\mathcal{D}}_1(G-S)}yz_i\nonumber\\
&&{}+(n-k-1)!\sum_{\{i,j\}\in\overline{\mathcal{D}}_2(G-S)}z_iz_j\nonumber\\
&=&\frac{1}{2}\mathbf{\hat{z}}_S^{\mathsf{T}}\left(\nabla^2q_k^S\right)\mathbf{\hat{z}}_S,\label{eq:qS}
\end{eqnarray}
where $\mathbf{\hat{z}}_S:=(y,z_{l_1},z_{l_2},\cdots,z_{l_{n-k+1}})^{\mathsf{T}}$. Let $\mathbf{a}=(a_1,a_2,\cdots,a_{n-k+1})^{\mathsf{T}}$ be a $(0,1)$-vector where $a_i=1$ if $\{l_i\}\in\overline{\mathcal{D}}_1(G-S)$ and $a_i=0$ otherwise, $B=\left(b_{ij}\right)$ an $(n-k+1)\times (n-k+1)$ symmetric $(0,1)$-matrix where $b_{ij}=1$ if $\{l_i,l_j\}\in\overline{\mathcal{D}}_2(G-S)$ and $b_{ij}=0$ otherwise. Then
\begin{equation*}
\nabla^2q_k^S=(n-k-1)!\left[\begin{array}{cc}
(n-k)(n-k+1) & (n-k)\mathbf{a}^{\mathsf{T}}\\
(n-k)\mathbf{a} & B
\end{array}\right].
\end{equation*}

Combined with (\ref{eq:qk}) and (\ref{eq:qS}), we can obtain: 
\begin{equation}\label{eq:qkqS}
q_k(y,z)=\sum\limits_{S\in\overline{\mathcal{D}}_{k-1}}q_k^S(y,z,z,\cdots,z).
\end{equation}

Substituting $z_{l_1}=z,z_{l_2}=z,\cdots,z_{l_{n-k+1}}=z$ into (\ref{eq:qS}), we can obtain:
\begin{eqnarray}
&&q_k^S(y,z,z,\cdots,z)=\nonumber\\
&&\frac{(n-k-1)!}{2}[y\ \ z]\left[\begin{array}{cc}
(n-k)(n-k+1) & (n-k)\sum\limits_{1\leqslant i\leqslant n-k+1} a_i\\
(n-k)\sum\limits_{1\leqslant i\leqslant n-k+1} a_i & \sum\limits_{1\leqslant i,j\leqslant n-k+1} b_{ij}
\end{array}\right]\left[\begin{array}{c}
y\\
z
\end{array}\right].\label{eq:QS}
\end{eqnarray}

Denote the $2\times 2$ matrix $Q_S:=\left[\begin{array}{cc}
(n-k)(n-k+1) & (n-k)\sum\limits_{1\leqslant i\leqslant n-k+1} a_i\\
(n-k)\sum\limits_{1\leqslant i\leqslant n-k+1} a_i & \sum\limits_{1\leqslant i,j\leqslant n-k+1} b_{ij}
\end{array}\right].$ Combined with (\ref{eq:qkqS}), (\ref{eq:QS}) and the fact that $q_k(y,z)=\frac{1}{2}\cdot[y\ \ z]\nabla^2q_k\left[\begin{array}{c}
y\\
z
\end{array}\right]$, we obtain:
\begin{equation}\label{eq:sum}
\nabla^2q_k=(n-k-1)!\sum\limits_{S\in\overline{\mathcal{D}}_{k-1}}Q_S.
\end{equation}

For $S\in\overline{\mathcal{D}}_{k-1}(G)$, let's consider the subgraph $G-S$. Since $S$ is a co-dependent set of $G$, $G-S$ must contain at least one edge. We call a vertex $i$ in $V(G)\setminus S$ a {\em co-dependent vertex} in $G-S$ if $\{i\}\in\overline{\mathcal{D}}_1(G-S)$, and a pair of vertices $\{i,j\}$ a {\em co-dependent pair} if $\{i,j\}\in\overline{\mathcal{D}}_2(G-S)$. By the definitions, $\sum\limits_{1\leqslant i\leqslant n-k+1} a_i$ is the number of co-dependent vertices in $G-S$ and $\sum\limits_{1\leqslant i,j\leqslant n-k+1} b_{ij}$ is twice of the number of co-dependent pairs in  $G-S$.  If a vertex $i\in V(G)\setminus S$ is not a co-dependent vertex in $G-S$, we call $i$ a {\em co-independent vertex} in $G-S$. The definition of {\em co-independent pairs} is similar. 

\noindent{\bf Claim.} The number of co-independent vertices in $G-S$ is at most 2.

By contradiction, w.l.o.g., assume that $l_1$, $l_2$ and $l_3$ are three co-independent vertices in $G-S$. By the definition, all of $\{l_2,l_3,l_4,\cdots,l_{n-k+1}\}$, $\{l_1,l_3,l_4,\cdots,l_{n-k+1}\}$ and $\{l_1,l_2,l_4,\cdots,l_{n-k+1}\}$ are independent sets in $G-S$. This deduces that $G-S$ contains no edges, a contradiction.   

Now, we distinguish cases to discuss $Q_S$.

{\bf Case 1.} There are two co-independent vertices in $G-S$.

W.l.o.g., assume that $l_1$ and $l_2$ are co-independent vertices. Then $\{l_2,l_3,l_4,\cdots,l_{n-k+1}\}$ and $\{l_1,l_3,l_4,$ $\cdots,l_{n-k+1}\}$ are independent sets and further $G-S\cong K_2\cup (n-k-1)K_1$, which only $l_1$ and $l_2$ are adjacent. In this case, $\sum\limits_{1\leqslant i\leqslant n-k+1} a_i=n-k-1$, $\sum\limits_{1\leqslant i,j\leqslant n-k+1} b_{ij}=2\cdot{n-k-1\choose 2}=(n-k-1)(n-k-2)$. So $Q_S=\left[\begin{array}{cc}
(n-k)(n-k+1) & (n-k)(n-k-1)\\
(n-k)(n-k-1) & (n-k-1)(n-k-2)
\end{array}\right].$ For convenience, we denote 
$$H_1=\left[\begin{array}{cc}
(n-k)(n-k+1) & (n-k)(n-k-1)\\
(n-k)(n-k-1) & (n-k-1)(n-k-2)
\end{array}\right]:=\left[\begin{array}{cc}
\alpha_1 & \beta_1\\
\beta_1 & \gamma_1
\end{array}\right].$$

{\bf Case 2.} There is exactly one co-independent vertex in $G-S$.

In this case, $G-S\cong K_{1,p}\cup (n-k-p)K_1$ ($p\geqslant 2$) and $\sum\limits_{1\leqslant i\leqslant n-k+1} a_i=n-k$. W.l.o.g., assume $l_1$ is the center of $K_{1,p}$.

{\bf Subcase 2.1.} $p=2$.

Assume $l_2,l_3$ is the leaves of the $K_{1,2}$. In this subcase, the co-independent pairs of $G-S$ are $\{l_2,l_3\}$ and $\{l_1,l_i\}$ ($2\leqslant i\leqslant n-k+1$). Thus, $\sum\limits_{1\leqslant i,j\leqslant n-k+1} b_{ij}=2\left({n-k+1 \choose 2}-(n-k)-1\right)=(n-k)(n-k-1)-2$. So $Q_S=\left[\begin{array}{cc}
(n-k)(n-k+1) & (n-k)^2\\
(n-k)^2 & (n-k)(n-k-1)-2
\end{array}\right].$ Let's denote 
$$H_2=\left[\begin{array}{cc}
(n-k)(n-k+1) & (n-k)^2\\
(n-k)^2 & (n-k)(n-k-1)-2
\end{array}\right]:=\left[\begin{array}{cc}
\alpha_2 & \beta_2\\
\beta_2 & \gamma_2
\end{array}\right].$$

{\bf Subcase 2.2.} $3\leqslant p\leqslant n-k$.

In this subcase, the co-independent pairs are $\{l_1,l_i\}$ ($2\leqslant i\leqslant n-k+1$). Thus, $\sum\limits_{1\leqslant i,j\leqslant n-k+1} b_{ij}=2\cdot\left({n-k+1 \choose 2}-(n-k)\right)=(n-k)(n-k-1)$. So $Q_S=\left[\begin{array}{cc}
(n-k)(n-k+1) & (n-k)^2\\
(n-k)^2 & (n-k)(n-k-1)
\end{array}\right].$ Let's denote 
$$H_3=\left[\begin{array}{cc}
(n-k)(n-k+1) & (n-k)^2\\
(n-k)^2 & (n-k)(n-k-1)
\end{array}\right]:=\left[\begin{array}{cc}
\alpha_3 & \beta_3\\
\beta_3 & \gamma_3
\end{array}\right].$$

{\bf Case 3.} There are no co-independent vertices in $G-S$.

In this case, $\sum\limits_{1\leqslant i\leqslant n-k+1} a_i=n-k+1$. Combined with the fact that $G-S$ contains at least one edge and $G-S\not\cong K_2\cup (n-k-1)K_1$, it contains at most $n-k-2$ isolated vertices. We distinguish three subcases in what follows.

{\bf Subcase 3.1.} There are $n-k-2$ isolated vertices in $G-S$.

In this subcase, $G-S\cong K_3\cup (n-k-2)K_1$. W.l.o.g., assume that $l_1,l_2,l_3$ induce the $K_3$. Then the co-independent pairs of $G-S$ are $\{l_1,l_2\}$, $\{l_1,l_3\}$ and $\{l_2,l_3\}$ and $\sum\limits_{1\leqslant i,j\leqslant n-k+1} b_{ij}=2\cdot\left({n-k+1 \choose 2}-3\right)=(n-k)(n-k+1)-6$. So $Q_S=\left[\begin{array}{cc}
(n-k)(n-k+1) & (n-k)(n-k+1)\\
(n-k)(n-k+1) & (n-k)(n-k+1)-6
\end{array}\right].$

{\bf Subcase 3.2.} There are $n-k-3$ isolated vertices in $G-S$.

In this subcase, $G-S\cong G^*\cup (n-k-3)K_1$, where $G^*$ is a graph of order 4 containing no isolated vertices and not a star. Since $G^*$ is a graph of order 4, the number of co-independent pairs $G-S$ is equal to the one of edges in the complement of $G^*$, i.e., $|E(\overline{G^*})|$. We can see that $|E(\overline{G^*})|\in\{0,1,2,3,4\}$ (see Fig. \ref{fig:4VertexGraph}), then $\sum\limits_{1\leqslant i,j\leqslant n-k+1} b_{ij}=(n-k)(n-k+1)-8$, $(n-k)(n-k+1)-6$, $(n-k)(n-k+1)-4$, $(n-k)(n-k+1)-2$ or $(n-k)(n-k+1)$. Thus, $Q_S=\left[\begin{array}{cc}
(n-k)(n-k+1) & (n-k)(n-k+1)\\
(n-k)(n-k+1) & (n-k)(n-k+1)-8
\end{array}\right],$ $\left[\begin{array}{cc}
(n-k)(n-k+1) & (n-k)(n-k+1)\\
(n-k)(n-k+1) & (n-k)(n-k+1)-6
\end{array}\right],$ $\left[\begin{array}{cc}
(n-k)(n-k+1) & (n-k)(n-k+1)\\
(n-k)(n-k+1) & (n-k)(n-k+1)-4
\end{array}\right],$
$\left[\begin{array}{cc}
(n-k)(n-k+1) & (n-k)(n-k+1)\\
(n-k)(n-k+1) & (n-k)(n-k+1)-2
\end{array}\right]$\\ or  $\left[\begin{array}{cc}
(n-k)(n-k+1) & (n-k)(n-k+1)\\
(n-k)(n-k+1) & (n-k)(n-k+1)
\end{array}\right].$ 
\begin{figure}[!htbp]
\centering
\scalebox{0.5}[0.5]{\includegraphics{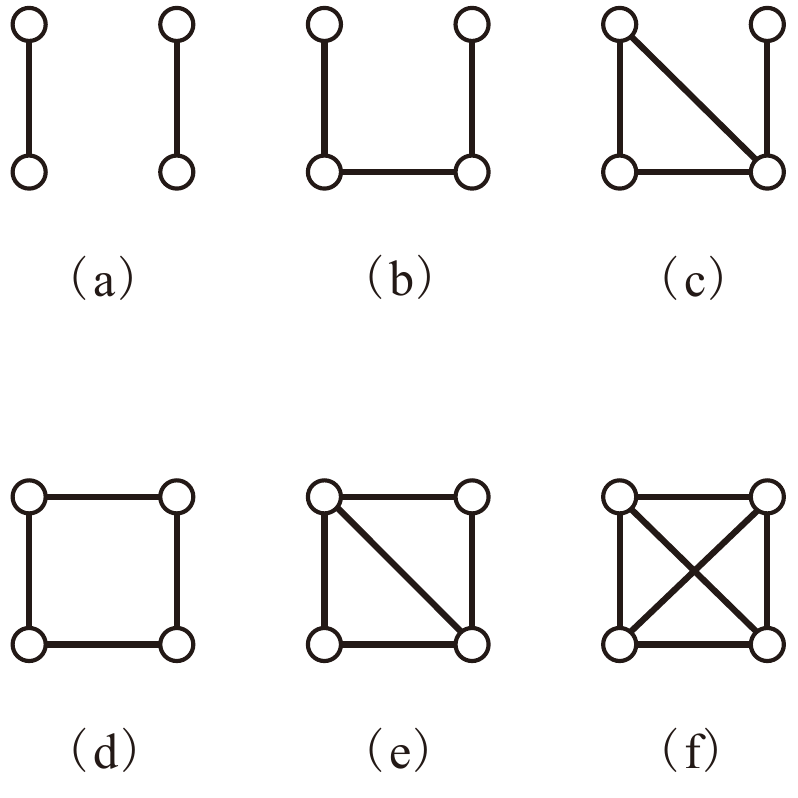}}
\caption{All graphs of order 4 containing no isolated vertices and not a star: (a) contains 4 co-independent pairs, (b) contains 3 co-independent pairs, (c) and (d) contain 2 co-independent pairs, (e) contains 1 co-independent pair and (f) contains no co-independent pairs.}{\label{fig:4VertexGraph}}
\end{figure}

{\bf Subcase 3.3.} There are at most $n-k-4$ isolated vertices in $G-S$.

If $G-S$ contains no co-independent pairs, $Q_S=\left[\begin{array}{cc}
(n-k)(n-k+1) & (n-k)(n-k+1)\\
(n-k)(n-k+1) & (n-k)(n-k+1)
\end{array}\right].$ Now, we assume $G-S$ contains a co-independent pair. W.l.o.g., let $\{l_1,l_2\}$ be a co-independent pair. Since $G-S$ contains no co-independent vertices, $N_{G-S}(l_1)\cap\{l_3,l_4,\cdots,l_{n-k+1}\}\neq\emptyset$ and $N_{G-S}(l_2)\cap\{l_3,l_4,\cdots,l_{n-k+1}\}\neq\emptyset$, where $N_{G-S}(l_i)$ is the neighborhood of $l_i$ in $G-S$. Since there are at most $n-k-4$ isolated vertices in $G-S$, $\left|\left(N_{G-S}(l_1)\cup N_{G-S}(l_2)\right)\cap\{l_3,l_4,\cdots,l_{n-k+1}\}\right|\geqslant 3$. If $\left|N_{G-S}(l_1)\cap\{l_3,l_4,\cdots,l_{n-k+1}\}\right|$ $\geqslant 2$ and $\left|N_{G-S}(l_2)\cap\{l_3,l_4,\cdots,l_{n-k+1}\}\right|\geqslant 2$, then $\{l_1,l_2\}$ is the unique co-independent pair in $G-S$ (see Fig. \ref{fig:Case31}) and further $\sum\limits_{1\leqslant i,j\leqslant n-k+1} b_{ij}=(n-k)(n-k+1)-2$. Otherwise, w.l.o.g., assume $l_2$ has exactly one neighbor $l_3$ in $\{l_3,l_4,\cdots,l_{n-k+1}\}$ in $G-S$. On this assumption, $l_1$ must have two neighbors other than $l_3$ in $\{l_3,l_4,\cdots,l_{n-k+1}\}$. We can see that the co-independent pairs in $G-S$ are $\{l_1,l_2\}$ and $\{l_1,l_3\}$ (see Fig. \ref{fig:Case32}), then $\sum\limits_{1\leqslant i,j\leqslant n-k+1} b_{ij}=(n-k)(n-k+1)-4$. So, in this subcase, $Q_S=\left[\begin{array}{cc}
(n-k)(n-k+1) & (n-k)(n-k+1)\\
(n-k)(n-k+1) & (n-k)(n-k+1)
\end{array}\right],$ $\left[\begin{array}{cc}
(n-k)(n-k+1) & (n-k)(n-k+1)\\
(n-k)(n-k+1) & (n-k)(n-k+1)-2
\end{array}\right]$ or $\left[\begin{array}{cc}
(n-k)(n-k+1) & (n-k)(n-k+1)\\
(n-k)(n-k+1) & (n-k)(n-k+1)-4
\end{array}\right].$

\begin{figure}[!htbp]
\centering
\scalebox{0.5}[0.5]{\includegraphics{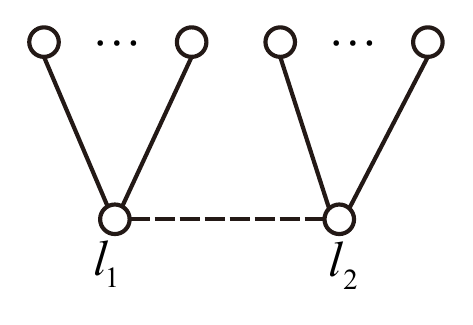}}
\caption{The case that $\left|N_{G-S}(l_1)\cap\{l_3,l_4,\cdots,l_{n-k+1}\}\right|$ $\geqslant 2$ and $\left|N_{G-S}(l_2)\cap\{l_3,l_4,\cdots,\right.$ $\left.l_{n-k+1}\}\right|\geqslant 2$: dashed edge represent that $u,v$ are adjacent or not.}{\label{fig:Case31}}
\end{figure}

\begin{figure}[!htbp]
\centering
\scalebox{0.5}[0.5]{\includegraphics{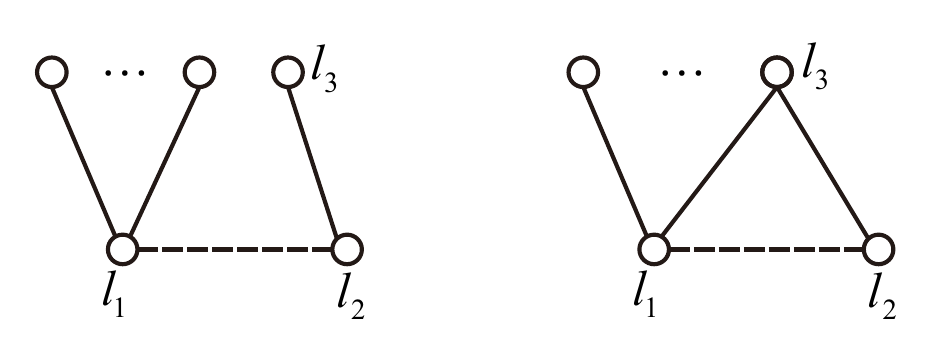}}
\caption{The case that $l_2$ has exactly one neighbor $l_3$ in $\{l_3,l_4,\cdots,l_{n-k+1}\}$: dashed edge represent that $u,v$ are adjacent or not.}{\label{fig:Case32}}
\end{figure}

Let's denote $$H_4=\left[\begin{array}{cc}
(n-k)(n-k+1) & (n-k)(n-k+1)\\
(n-k)(n-k+1) & (n-k)(n-k+1)-8
\end{array}\right]:=\left[\begin{array}{cc}
\alpha_4 & \beta_4\\
\beta_4 & \gamma_4
\end{array}\right],$$
$$H_5=\left[\begin{array}{cc}
(n-k)(n-k+1) & (n-k)(n-k+1)\\
(n-k)(n-k+1) & (n-k)(n-k+1)-6
\end{array}\right]:=\left[\begin{array}{cc}
\alpha_5 & \beta_5\\
\beta_5 & \gamma_5
\end{array}\right],$$
$$H_6=\left[\begin{array}{cc}
(n-k)(n-k+1) & (n-k)(n-k+1)\\
(n-k)(n-k+1) & (n-k)(n-k+1)-4
\end{array}\right]:=\left[\begin{array}{cc}
\alpha_6 & \beta_6\\
\beta_6 & \gamma_6
\end{array}\right],$$
$$H_7=\left[\begin{array}{cc}
(n-k)(n-k+1) & (n-k)(n-k+1)\\
(n-k)(n-k+1) & (n-k)(n-k+1)-2
\end{array}\right]:=\left[\begin{array}{cc}
\alpha_7 & \beta_7\\
\beta_7 & \gamma_7
\end{array}\right]$$
and
$$H_8=\left[\begin{array}{cc}
(n-k)(n-k+1) & (n-k)(n-k+1)\\
(n-k)(n-k+1) & (n-k)(n-k+1)
\end{array}\right]:=\left[\begin{array}{cc}
\alpha_8 & \beta_8\\
\beta_8 & \gamma_8
\end{array}\right].$$ 

Now, it have been proved that, for any co-dependent set $S\in\overline{\mathcal{D}}_{k-1}(G)$, the matrix $Q_S=H_i$ for some $i\in\{1,2,\cdots,8\}$. Denote $t_i:=|\{S\in\overline{\mathcal{D}}_{k-1}(G)|Q_S=H_i\}|$ ($i=1,2,\cdots,8$). By (\ref{eq:sum}), we obtain:
\begin{equation}\label{eq:sum2}
\det\left(\nabla^2q_k\right)=\left((n-k-1)!\right)^2\det\left(\sum_{i=1}^8t_iH_i\right)=\left((n-k-1)!\right)^2\left|\begin{array}{cc}
\sum\limits_{i=1}^8\alpha_it_i & \sum\limits_{i=1}^8\beta_it_i\\
\sum\limits_{i=1}^8\beta_it_i & \sum\limits_{i=1}^8\gamma_it_i
\end{array}\right|.
\end{equation}
The determinant $\left|\begin{array}{cc}
\sum\limits_{i=1}^8\alpha_it_i & \sum\limits_{i=1}^8\beta_it_i\\
\sum\limits_{i=1}^8\beta_it_i & \sum\limits_{i=1}^8\gamma_it_i
\end{array}\right|$ is a quadratic form in $t_1,t_2,\cdots,t_8$. Let's denote its matrix representation: 
\begin{equation}\label{eq:MatrixRepresentation}
\left|\begin{array}{cc}
\sum\limits_{i=1}^8\alpha_it_i & \sum\limits_{i=1}^8\beta_it_i\\
\sum\limits_{i=1}^8\beta_it_i & \sum\limits_{i=1}^8\gamma_it_i
\end{array}\right|:=\mathbf{t}^{\mathsf{T}}A\mathbf{t},
\end{equation}
where $\mathbf{t}=(t_1,t_2,t_3,t_4,t_5,t_6,t_7,t_8)^{\mathsf{T}}$, $A$ is an $8\times 8$ symmetric matrix and $A_{ij}=\frac{1}{2}(\alpha_i\gamma_j+\alpha_j\gamma_i-2\beta_i\beta_j)$ for $1\leqslant i,j\leqslant 8$. For convenience, we denote $r:=n-k+1$. $r\geqslant 4$ since $k\leqslant n-3$. By the calculation, we obtain: 
\begin{equation}\label{eq:MatrixA}
A=(r-1)\cdot\left[\begin{array}{cccccccc}
-2(r-2) & -2(r-1) & -(r-2) & -3r & -2r & -r & 0 & r\\
-2(r-1) & -(3r-1) & -(2r-1) & -5r & -4r & -3r & -2r & -r\\
-(r-2) & -(2r-1) & -(r-1) & -4r & -3r & -2r & -r & 0\\
-3r & -5r & -4r & -8r & -7r & -6r & -5r & -4r\\
-2r & -4r & -3r & -7r & -6r & -5r & -4r & -3r\\
-r & -3r & -2r & -6r & -5r & -4r & -3r & -2r\\
0 & -2r & -r & -5r & -4r & -3r & -2r & -r\\
r & -r & 0  & -4r & -3r & -2r & -r & 0\\
\end{array}\right].
\end{equation}

If $G$ is $K_2\cup 2K_1$-free, for any co-dependent set $S$ of size $k-1$, $G-S\not\cong K_2\cup (n-k-1)K_1$, that is, $t_1=0$. If $G$ has an independent set of size $n-2$, for any co-dependent set $S$ of size $k-1$, $G-S$ has an independent set of size $n-k-1$. Then $G-S$ must contain co-independent pairs. It deduces that $t_8=0$. 

Since $t_i\geqslant 0$ for all $1\leqslant i\leqslant 8$, combined with (\ref{eq:sum2}),  (\ref{eq:MatrixRepresentation}) and (\ref{eq:MatrixA}), we can see that $\det(\nabla^2q_k)\leqslant 0$ if $t_1=0$ or $t_8=0$. This completes the proof.
\end{proof}

Now, we prove Theorem \ref{thm:ULC}.
\begin{proof}[Proof of Theorem \ref{thm:ULC}]
By (\ref{eq:qk}), 
\begin{eqnarray*}
\det(\nabla^2q_k)&=&\left|\begin{array}{cc}
(n-k+1)!\bar{d}_{k-1} & (n-k)!k\bar{d}_k\\
(n-k)!k\bar{d}_k & (n-k-1)!k(k+1)\bar{d}_{k+1}
\end{array}\right|\\
 &=&k^2\left((n-k)!\right)^2\left(\left(1+\frac{1}{k}\right)\left(1+\frac{1}{n-k}\right)\bar{d}_{k-1}\bar{d}_{k+1}-\bar{d}_k^2\right).
\end{eqnarray*}

By Lemma \ref{lem:Hessian<=0}, if $G$ is $K_2\cup 2K_1$-free or has an independent set of size $n-2$, $\det(\nabla^2q_k)\leqslant 0$ for $1\leqslant k\leqslant n-3$, that is, $\left(1+\frac{1}{k}\right)\left(1+\frac{1}{n-k}\right)\bar{d}_{k-1}\bar{d}_{k+1}-\bar{d}_k^2\leqslant 0$ for $1\leqslant k\leqslant n-3$. Combined with $\bar{d}_n=\bar{d}_{n-1}=0$, we obtain that $\{\bar{d}_k\}_{k=0}^n$ is ultra log-concave. Thus, $D(G,x)$ is ultra log-concave.
\end{proof}
\section{Proof of Theorem \ref{thm:RR}}
Before the proof of Theorem \ref{thm:RR}, we will introduce some lemmas and terminology in what follows. Firstly, we have the following lemma:
\begin{lem}\label{lem:IsolatedVertices}
For any graph $G$ and $m\in\mathds{N}$, $D(G\cup mK_1,x)=(x+1)^mD(G,x)$.
\end{lem}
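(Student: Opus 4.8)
The plan is to observe that adjoining $m$ isolated vertices to $G$ creates no new edges, so that the dependent sets of $G\cup mK_1$ are precisely the unions of a dependent set of $G$ with an arbitrary subset of the new vertices; this decomposition will factorize the generating function. Write $V(mK_1)=\{u_1,\dots,u_m\}$. For any $S\subseteq V(G\cup mK_1)$, every edge of $G\cup mK_1$ lies inside $G$, hence $(G\cup mK_1)[S]$ contains an edge if and only if $G[S\cap V(G)]$ does. Therefore $S$ is a dependent set of $G\cup mK_1$ if and only if $A:=S\cap V(G)$ is a dependent set of $G$, while $B:=S\cap\{u_1,\dots,u_m\}$ ranges freely over all subsets of $\{u_1,\dots,u_m\}$.

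First I would turn this bijection $S\mapsto (A,B)$ into a count by size. Since $|S|=|A|+|B|$ and $A,B$ are chosen independently, we get $d_k(G\cup mK_1)=\sum_{j\geqslant 0}d_j(G){m\choose k-j}$; that is, the coefficient sequence of $D(G\cup mK_1,x)$ is the Cauchy convolution of $\{d_j(G)\}$ with the binomial sequence $\{{m\choose i}\}$. Summing over $k$ then factorizes the generating function, $D(G\cup mK_1,x)=D(G,x)\cdot\sum_{i=0}^m{m\choose i}x^i=(x+1)^mD(G,x)$, which is the assertion.

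An even shorter route goes through Proposition \ref{pro:Complementary}. The independence polynomial is multiplicative over disjoint unions (again because no edges run between components), and $I(mK_1,x)=(x+1)^m$, so $I(G\cup mK_1,x)=(x+1)^mI(G,x)$. Writing $n=|V(G)|$ and using $D(H,x)=(x+1)^{|V(H)|}-I(H,x)$ from Proposition \ref{pro:Complementary} gives $D(G\cup mK_1,x)=(x+1)^{n+m}-(x+1)^mI(G,x)=(x+1)^m\bigl((x+1)^n-I(G,x)\bigr)=(x+1)^mD(G,x)$. Either way the result is elementary bookkeeping; there is no genuine obstacle beyond checking the disjointness of the decomposition (equivalently, the multiplicativity of $I$), so the only care needed is to confirm that the isolated vertices contribute no constraint and that the convolution assembles correctly, including the degenerate low-order terms $d_0=d_1=0$.
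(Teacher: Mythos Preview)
Your proposal is correct; in fact your second route via Proposition~\ref{pro:Complementary} and the multiplicativity of $I$ is exactly the paper's proof, line for line. Your first (direct convolution) argument is also valid and simply bypasses the detour through $I$.
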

\begin{proof}
Assume that $|V(G)|=n$. It is known that $I(G\cup mK_1,x)=(I(K_1,x))^mI(G,x)=(x+1)^mI(G,x)$ (see \cite{gh83}). Then, by Proposition \ref{pro:Complementary}, $D(G\cup mK_1,x)=(x+1)^{m+n}-I(G\cup mK_1,x)=(x+1)^{m+n}-(x+1)^mI(G,x)=(x+1)^m[(x+1)^n-I(G,x)]=(x+1)^mD(G,x)$.
\end{proof}

Let $P(x)=\sum\limits_{k=0}^ma_kx^k$ be a polynomial. $P'(x):=\sum\limits_{k=0}^{m-1}(k+1)a_{k+1}x^k$ denotes the {\em derivative} of $P(x)$. It is known that
\begin{pro}[Folklore]\label{pro:Derivative}
Let $P(x)$ be a real polynomial. If $P(x)$ is real-rooted, so is $P'(x)$.
\end{pro}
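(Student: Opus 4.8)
The plan is to prove this classical fact by combining Rolle's theorem with a careful count of roots according to multiplicity. Since $P$ is real-rooted, I may write $P(x)=a_m\prod_{i=1}^{t}(x-s_i)^{m_i}$, where $s_1<s_2<\cdots<s_t$ are the distinct real roots with multiplicities $m_i\geqslant 1$ and $\sum_{i=1}^{t}m_i=m$. The constant and linear cases are trivial (there $P'$ is the zero polynomial or a nonzero constant), so assume $m\geqslant 2$. Because $a_m\neq 0$, the formal derivative $P'(x)=\sum_{k=0}^{m-1}(k+1)a_{k+1}x^k$ has degree exactly $m-1$; hence it suffices to exhibit $m-1$ real roots of $P'$ counted with multiplicity, as these must then be all of them and leave no room for nonreal roots.

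I would locate the $m-1$ roots from two sources. First, at each multiple root: if $m_i\geqslant 2$, writing $P(x)=(x-s_i)^{m_i}Q(x)$ with $Q(s_i)\neq 0$ and differentiating gives $P'(x)=(x-s_i)^{m_i-1}\bigl[m_iQ(x)+(x-s_i)Q'(x)\bigr]$, and since the bracket equals $m_iQ(s_i)\neq 0$ at $x=s_i$, the point $s_i$ is a root of $P'$ of multiplicity exactly $m_i-1$. Summing over $i$ contributes $\sum_{i=1}^{t}(m_i-1)=m-t$ real roots of $P'$. Second, applying Rolle's theorem on each interval $[s_i,s_{i+1}]$ with $1\leqslant i\leqslant t-1$: since $P(s_i)=P(s_{i+1})=0$, there is some $\xi_i\in(s_i,s_{i+1})$ with $P'(\xi_i)=0$. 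These $t-1$ roots lie in pairwise disjoint open intervals, so they are distinct from each other and from the $s_i$. Adding the two contributions gives $(m-t)+(t-1)=m-1$ real roots of $P'$ counted with multiplicity.

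Matching this count against $\deg P'=m-1$ forces every root of $P'$ to be real, so $P'$ is real-rooted. The only delicate point—and the closest thing to an obstacle—is the bookkeeping of multiplicities: one must confirm that the Rolle roots $\xi_i$ are genuinely disjoint from the roots forced at the multiple points $s_i$, and that each $s_i$ of multiplicity $m_i$ drops to multiplicity exactly $m_i-1$ (not more) in $P'$, so that the tally is precisely $m-1$. A heavier but conceptually cleaner alternative is the Gauss--Lucas theorem, which places the roots of $P'$ in the convex hull of the roots of $P$; for real-rooted $P$ that hull is a subinterval of $\mathds{R}$, giving real-rootedness of $P'$ at once. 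I would nonetheless favor the Rolle-based argument, as it is elementary and self-contained.
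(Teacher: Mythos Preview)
Your argument is correct. The paper does not actually prove this proposition: it is labeled ``Folklore'' and stated without proof, then used as a tool. Your Rolle-plus-multiplicity count is the standard elementary justification, and the bookkeeping you flag (that each $s_i$ drops to multiplicity exactly $m_i-1$ in $P'$, and that the Rolle roots $\xi_i$ lie in pairwise disjoint open intervals hence are distinct from one another and from the $s_j$) is handled cleanly. The Gauss--Lucas alternative you mention is also valid; either route supplies what the paper omits.
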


Using Proposition \ref{pro:Derivative}, we obtain
\begin{lem}\label{lem:3ConsecutiveCoeffiicients}
Let $P(x)=\sum\limits_{k=0}^ma_kx^k$ be a real polynomial with $a_k\geqslant 0$ for $1\leqslant k\leqslant m$. If there exists an integer $j$ for $2\leqslant j\leqslant m-1$ such that $a_{j-1}=a_j=a_{j+1}>0$, then $P(x)$ is not real-rooted.
\end{lem}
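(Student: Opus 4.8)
The plan is to argue by contradiction: assuming $P(x)$ is real-rooted, I would derive the full-strength Newton inequality at the index $j$, namely
\[
a_j^2 \geqslant \left(1+\tfrac{1}{j}\right)\left(1+\tfrac{1}{m-j}\right) a_{j-1}a_{j+1},
\]
and then substitute $a_{j-1}=a_j=a_{j+1}=c>0$. Dividing by $c^2>0$ would give $1 \geqslant \left(1+\frac{1}{j}\right)\left(1+\frac{1}{m-j}\right)$; but $2\leqslant j\leqslant m-1$ forces $1+\frac{1}{j}>1$ and $1+\frac{1}{m-j}>1$, so the right-hand side exceeds $1$, which is absurd. Hence the lemma follows once the displayed inequality is in hand.

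The core task is therefore to produce that inequality for the three consecutive coefficients using only Proposition \ref{pro:Derivative}. I would do this by the classical differentiate–reverse–differentiate reduction to a quadratic. First differentiate $P$ exactly $j-1$ times; by Proposition \ref{pro:Derivative} the result $P_1:=P^{(j-1)}$ is again real-rooted, it has degree $m-j+1$ (its leading coefficient is a positive multiple of $a_m>0$), and its three lowest coefficients are $(j-1)!\,a_{j-1}$, $j!\,a_j$ and $\frac{(j+1)!}{2}a_{j+1}$. This step already annihilates the constant term, so the absence of any sign hypothesis on $a_0$ is harmless, and since the new constant term is $(j-1)!\,c\neq 0$ the polynomial $P_1$ has no zero root.

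Next I would pass to the reversal $P_2(x):=x^{m-j+1}P_1(1/x)$, whose roots are the reciprocals of the (nonzero) roots of $P_1$, so $P_2$ is still real-rooted of degree $m-j+1$; reversing moves the three coefficients above into the three top-degree coefficients of $P_2$. Differentiating $P_2$ a further $m-j-1$ times (legitimate since $j\leqslant m-1$) and invoking Proposition \ref{pro:Derivative} once more yields a real-rooted quadratic $P_3$ whose coefficients are explicit positive multiples of $a_{j-1},a_j,a_{j+1}$; its leading coefficient is a positive multiple of $a_{j-1}=c>0$, so $P_3$ is genuinely of degree $2$. A real quadratic with only real roots has nonnegative discriminant, and after cancelling the factorial factors this discriminant condition reduces precisely to the Newton inequality displayed above.

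The main obstacle is bookkeeping rather than ideas: I must track the factorial scalings through the two rounds of differentiation and the reversal so that the discriminant of $P_3$ collapses exactly to the factor $\left(1+\frac{1}{j}\right)\left(1+\frac{1}{m-j}\right)$, and I must handle the endpoint $j=m-1$ (where $m-j-1=0$, so no second differentiation is needed and $P_1$ is already quadratic) as a degenerate but valid instance of the same argument.
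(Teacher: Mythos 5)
Your proof is correct in substance, but it travels a longer road than the paper's, and the two differ in an instructive way. The paper also argues by contradiction and also differentiates once to dispose of the sign-unconstrained constant term, but at that point it simply invokes the hierarchy recorded in the introduction --- for nonnegative coefficient sequences, real-rootedness implies ordered log-concavity --- and applies it to $P'(x)$: the three consecutive coefficients $(j-1)a_{j-1}$, $ja_j$, $(j+1)a_{j+1}$ of $P'$ would have to satisfy $(ja_j)^2 \geqslant \left(1+\frac{1}{j-1}\right)(j-1)a_{j-1}\cdot(j+1)a_{j+1} = j(j+1)a_j^2$, i.e. $j\geqslant j+1$, a contradiction. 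So the weaker, degree-free ordered log-concavity already suffices after a single coefficient-shifting differentiation; the full Newton/ultra inequality you establish is stronger than what is needed. What your version buys is self-containedness: you re-derive Newton's inequality at index $j$ from Proposition \ref{pro:Derivative} alone, via the classical differentiate--reverse--differentiate reduction to a quadratic discriminant, rather than citing the implication from \cite{bg21}. What it costs is degree bookkeeping, and there is one small slip there: you assert that $P_1=P^{(j-1)}$ has degree $m-j+1$ with leading coefficient a positive multiple of $a_m>0$, but the hypothesis only gives $a_m\geqslant 0$; the lemma nowhere assumes $\deg P=m$. This is easily repaired --- either run your argument with $m$ replaced by $d:=\deg P$ (note $d\geqslant j+1$ because $a_{j+1}>0$, so $\left(1+\frac{1}{j}\right)\left(1+\frac{1}{d-j}\right)>1$ still yields the contradiction), or observe that the padded reversal $x^{m-j+1}P_1(1/x)$ merely acquires extra zero roots and so remains real-rooted --- but as written that step is unjustified. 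The paper's route sidesteps this issue entirely, since ordered log-concavity makes no reference to $m$.
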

\begin{proof}
By contradiction assume $P(x)$ is real-rooted. Then $P'(x)$ is real-rooted by Proposition \ref{pro:Derivative}. The coefficient sequence of $P'(x)$, $\left\{(k+1)a_{k+1}\right\}^{m-1}_{k=0}$, is a non-negative sequence. Let's consider the coefficients of the terms $x^j$, $x^{j-1}$ and $x^{j-2}$ of $P'(x)$. We obtain
\begin{equation*}
\left(1+\frac{1}{j-1}\right)(j+1)a_{j+1}\cdot (j-1)a_{j-1}=j(j+1)a^2_j>(ja_j)^2.
\end{equation*}
Thus, $P'(x)$ is not ordered log-concave, a contradiction with real-rootedness of $P'(x)$.
\end{proof}

Now, we introduce a necessary and sufficient condition of real-rootedness of real polynomials: Sturm's Theorem. Firstly, we give the definition of Sturm sequence: 
\begin{den}\cite{b03,bh02,mr23}\label{def:SturmSequence}
Let $P(x)$ be a real polynomial with degree at least 1 and positive leading coefficient. The polynomial sequence 
\begin{equation*}
P_0(x),P_1(x),\cdots,P_k(x)
\end{equation*}
is called a {\em Sturm sequence} of $P(x)$ if it satisfies
\begin{enumerate}
\item $P_0(x):=P(x)$,
\item $P_1(x):=P'(x)$, 
\item $P_i(x):=-R_{i-2,i-1}(x)$ for $i\geqslant 2$, where
\begin{equation*}
P_{i-2}(x)=Q_{i}(x)P_{i-1}(x)+R_{i-2,i-1}(x)
\end{equation*}
and the degree of $R_{i-2,i-1}(x)$ is less than $P_{i-1}(x)$, i.e., $R_{i-2,i-1}(x)$ is the remainder of the division of $P_{i-2}(x)$ by $P_{i-1}(x)$, and
\item $P_k(x)$ is the polynomial with the smallest possible non-negative degree such that $P_k(x)\neq 0$.
\end{enumerate}
\end{den}
Sturm's Theorem is shown in what follows: 
\begin{lem}[Sturm's Theorem]{\em\cite{b03,bh02,mr23}}\label{lem:SturmsTheorem}
Let $P(x)$ be a real polynomial with degree at least 1 and positive leading coefficient. Let $P_0(x),P_1(x),\cdots,P_k(x)$ be Sturm sequence of $P(x)$, where the degree of $P_i(x)$ is $d_i$ for $0\leqslant i\leqslant k$. $P(x)$ is real-rooted if and only if $P_i(x)$ has a positive leading coefficient for $0\leqslant i\leqslant k$ and $d_i-d_{i+1}=1$ for $0\leqslant i\leqslant k-1$.  
\end{lem}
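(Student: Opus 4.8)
The plan is to derive this specialized form of Sturm's Theorem from the classical Sturm root count together with a sign analysis of the sequence at $\pm\infty$. Write $\ell_i$ for the leading coefficient of $P_i(x)$, so $d_i=\deg P_i$. The starting point I would invoke (this is the standard content of the cited references) is the classical statement that the number of \emph{distinct} real roots of $P(x)$ equals $V(-\infty)-V(+\infty)$, where $V(x)$ counts the sign changes in $P_0(x),\dots,P_k(x)$. Since each $P_i$ has finitely many roots, for $|x|$ large no $P_i(x)$ vanishes; hence at $+\infty$ the sign of $P_i$ is $\operatorname{sgn}(\ell_i)$ and at $-\infty$ it is $\operatorname{sgn}(\ell_i)(-1)^{d_i}$. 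Consequently $V(+\infty)=\#\{i:\operatorname{sgn}(\ell_i)\neq\operatorname{sgn}(\ell_{i+1})\}$ and $V(-\infty)=\#\{i:\operatorname{sgn}(\ell_i)(-1)^{d_i}\neq\operatorname{sgn}(\ell_{i+1})(-1)^{d_{i+1}}\}$, i.e.\ each decomposes into independent contributions from consecutive pairs.

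Next I would identify $P_k(x)$ with $\gcd(P,P')$ up to a nonzero scalar, which is exactly the output of the Euclidean-type recursion defining the Sturm sequence. Factoring $P=c\prod_j(x-r_j)^{m_j}$ over $\mathds{C}$ with the $r_j$ distinct gives $\deg\gcd(P,P')=\sum_j(m_j-1)=d_0-s$, where $s$ is the number of distinct complex roots; thus $s=d_0-d_k$. The crucial reformulation is then that $P(x)$ is real-rooted if and only if its number of distinct real roots equals $s$, that is, if and only if $V(-\infty)-V(+\infty)=d_0-d_k$.

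The heart of the argument is a term-by-term comparison. For $0\leqslant i\leqslant k-1$ let $\Delta_i$ denote the contribution of the pair $(i,i+1)$ to $V(-\infty)-V(+\infty)$. A short parity check shows $\Delta_i\in\{-1,0,1\}$: if $d_i-d_{i+1}$ is even then $\Delta_i=0$, while if it is odd then $\Delta_i=+1$ exactly when $\operatorname{sgn}(\ell_i)=\operatorname{sgn}(\ell_{i+1})$ and $\Delta_i=-1$ otherwise. Because the recursion strictly lowers degrees, $d_i-d_{i+1}\geqslant 1$, so $d_0-d_k=\sum_i(d_i-d_{i+1})\geqslant k$, whereas $V(-\infty)-V(+\infty)=\sum_i\Delta_i\leqslant k$. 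If $P$ is real-rooted these two quantities coincide, forcing every inequality to be tight: each $d_i-d_{i+1}=1$ and each $\Delta_i=1$, whence $\operatorname{sgn}(\ell_i)=\operatorname{sgn}(\ell_{i+1})$ for all $i$; since $\ell_0>0$, this yields $\ell_i>0$ throughout. Conversely, if all gaps equal $1$ and all $\ell_i>0$, then each $\Delta_i=1$, so $V(-\infty)-V(+\infty)=k=d_0-d_k$ and $P$ is real-rooted.

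I expect the main obstacle to be bookkeeping rather than any deep idea: one must verify carefully that $V(\pm\infty)$ genuinely splits into independent per-pair contributions (valid only because no $P_i$ vanishes at $\pm\infty$) and that the identification $P_k\sim\gcd(P,P')$ survives the sign conventions built into the Sturm recursion. Once the classical distinct-root count and the gcd–degree identity are available, the stated equivalence falls out of the pigeonhole-style tightness argument, with the positivity of the leading coefficients emerging automatically from $\ell_0>0$.
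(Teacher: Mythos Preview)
The paper does not give its own proof of this lemma: it is quoted directly from the cited references \cite{b03,bh02,mr23} and used as a black box. So there is no in-paper argument to compare against.

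Your derivation is correct and is essentially the standard way to extract this ``degree-drop and positive leading coefficient'' formulation from the classical Sturm root count. The key steps --- identifying $P_k$ with $\gcd(P,P')$ up to a scalar so that $d_0-d_k$ equals the number $s$ of distinct complex roots, reducing real-rootedness to $V(-\infty)-V(+\infty)=d_0-d_k$, and then the pigeonhole tightness argument using $\sum_i\Delta_i\leqslant k\leqslant\sum_i(d_i-d_{i+1})=d_0-d_k$ --- are all sound. Your parity computation of $\Delta_i$ is correct, and the observation that the Sturm recursion strictly lowers degrees (so each $d_i-d_{i+1}\geqslant1$) is exactly what makes the tightness argument work. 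The only points worth stating a touch more explicitly in a final write-up are that the classical Sturm count is valid even when $P$ has repeated roots (which you need, since you are characterizing real-rootedness with multiplicity), and that the sign convention $P_i=-R_{i-2,i-1}$ does not affect the gcd identification because $\gcd$ is insensitive to unit factors.
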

By using Lemma \ref{lem:SturmsTheorem}, we obtain the following lemma:
\begin{lem}\label{lem:Degree>=n-2}
Let $G$ be a graph of order $n$ and $\mathcal{A}$ a graph property. If $P_{\mathcal{A}}(G,x)$ is real-rooted and $P_{\mathcal{A}}(G,x)\neq (x+1)^n$, then there exists a subset $S$ of $V(G)$ such that $|S|\geqslant n-2$ and $G[S]\not\in\mathcal{A}$. In particular, if $G$ contains at least one edge and $D(G,x)$ is real-rooted, then $G$ has an independent set of size $n-2$.
\end{lem}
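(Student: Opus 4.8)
The plan is to argue by contraposition. Assume that every $S\subseteq V(G)$ with $|S|\geqslant n-2$ satisfies $G[S]\in\mathcal{A}$; I will show that real-rootedness then forces $P_{\mathcal{A}}(G,x)=(x+1)^n$. First I would translate this assumption into information about the top coefficients. Taking $S=V(G)$ gives $a_n(G)=1$; taking all $n$ subsets of size $n-1$ gives $a_{n-1}(G)=n$; taking all $\binom{n}{2}$ subsets of size $n-2$ gives $a_{n-2}(G)=\binom{n}{2}$. Hence the three leading coefficients of $P_{\mathcal{A}}(G,x)$ coincide with those of $(x+1)^n$, and in particular $P_{\mathcal{A}}(G,x)$ is monic of degree exactly $n$ (this is what $a_n(G)=1$ secures, so that there are precisely $n$ roots to work with).

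Next I would exploit the root structure. Since $P_{\mathcal{A}}(G,x)$ has non-negative coefficients and positive leading coefficient, any positive $x$ makes every term non-negative and the leading term strictly positive, so there are no positive roots; being real-rooted, I may write $P_{\mathcal{A}}(G,x)=\prod_{i=1}^{n}(x+r_i)$ with all $r_i\geqslant 0$. Comparing coefficients gives $e_1:=\sum_i r_i=n$ and $e_2:=\sum_{i<j}r_ir_j=\binom{n}{2}$, whence $\sum_i r_i^2=e_1^2-2e_2=n^2-n(n-1)=n$.

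The crux is the equality case: by Cauchy--Schwarz, $\bigl(\sum_i r_i\bigr)^2\leqslant n\sum_i r_i^2$, and here both sides equal $n^2$. Equality forces $r_1=\cdots=r_n$, and with $\sum_i r_i=n$ this yields $r_i=1$ for every $i$, so $P_{\mathcal{A}}(G,x)=(x+1)^n$, contradicting the hypothesis. (Equivalently, one could phrase this through the equality case of Newton's inequality applied to the normalized coefficients $a_k/\binom{n}{k}$, all three of which are $1$ here.) The small cases $n\leqslant 2$ are vacuous or immediate since $n-2\leqslant 0$, so this argument is only needed for $n\geqslant 3$. I would flag this equality step as the only real content; everything else is bookkeeping.

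Finally, for the ``in particular'' clause take $\mathcal{A}$ to be the property of containing at least one edge, so $P_{\mathcal{A}}(G,x)=D(G,x)$ and $G[S]\notin\mathcal{A}$ means $G[S]$ is edgeless, i.e. $S$ is independent. Because $d_0(G)=0\neq\binom{n}{0}$, we always have $D(G,x)\neq(x+1)^n$, so the main part produces an independent set $S$ with $|S|\geqslant n-2$. Since $G$ has an edge, $S\neq V(G)$, so $|S|\in\{n-2,n-1\}$; deleting one vertex if necessary yields an independent set of size exactly $n-2$.
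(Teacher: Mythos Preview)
Your proof is correct and takes a genuinely different route from the paper's. The paper proceeds via Sturm's Theorem: assuming $G[S]\in\mathcal{A}$ for all $|S|\geqslant n-2$, it notes that $P_{\bar{\mathcal{A}}}(G,x)$ has degree at most $n-3$, computes the first two steps of the Sturm sequence of $P_{\mathcal{A}}(G,x)=(x+1)^n-P_{\bar{\mathcal{A}}}(G,x)$, and finds that the degree drops by at least $2$ between $P_1$ and $P_2$, contradicting the Sturm criterion for real-rootedness. Your argument instead reads off $a_n=1$, $a_{n-1}=n$, $a_{n-2}=\binom{n}{2}$, writes the polynomial as $\prod_i(x+r_i)$ with $r_i\geqslant 0$, and uses the equality case of Cauchy--Schwarz (equivalently, of Newton's inequality at the top) to force every $r_i=1$. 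Your approach is more elementary and self-contained---it avoids importing the Sturm machinery and isolates the combinatorial content in a single inequality---while the paper's route is more mechanical and plugs directly into a standard real-rootedness test already stated in the paper. Both reach the same contradiction; yours is arguably cleaner for this particular lemma.
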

\begin{proof}
By contradiction assume that $G[S]\in\mathcal{A}$ for any $S\subseteq V(G)$ where $|S|\geqslant n-2$. By the definition, the degree of $P_{\bar{\mathcal{A}}}(G,x)$ is at most $n-3$. Combined with $P_{\mathcal{A}}(G,x)\neq (x+1)^n$ and Proposition \ref{pro:Complementary}, $P_{\bar{\mathcal{A}}}(G,x)\neq 0$ or $(x+1)^n$. Let $P_0(x),P_1(x),\cdots,P_k(x)$ be Sturm sequence of $P_{\mathcal{A}}(G,x)$, where the degree of $P_i(x)$ is $d_i$ for $0\leqslant i\leqslant k$. By the definition of Sturm sequence, $P_0(x):=P_{\mathcal{A}}(G,x)=(x+1)^n-P_{\bar{\mathcal{A}}}(G,x)$, $P_1(x):=P'_{\mathcal{A}}(G,x)=n(x+1)^{n-1}-P'_{\bar{\mathcal{A}}}(G,x)$. Since the degree of $P_{\bar{\mathcal{A}}}(G,x)$ is at most $n-3$, the degree of $P'_{\bar{\mathcal{A}}}(G,x)$ is at most $n-4$ and $d_1=n-1$. Since $P_{\bar{\mathcal{A}}}(G,x)\neq 0$ or $(x+1)^n$, $P_2(x)=\frac{1}{n}(x+1)P_1(x)-P_0(x)=P_{\bar{\mathcal{A}}}(G,x)-\frac{1}{n}(x+1)P'_{\bar{\mathcal{A}}}(G,x)\neq 0$. We can see that $d_2\leqslant n-3$, then $d_1-d_2\geqslant 2$, a contradiction with Lemma \ref{lem:SturmsTheorem}.
\end{proof}

Now, we can prove Theorem \ref{thm:RR}.
\begin{proof}[Proof of Theorem \ref{thm:RR}]
{\em Sufficiency.} By Lemma \ref{lem:IsolatedVertices}, we only need to prove the real-rootedness of the dependence polynomials of $K_2$, $P_3$, $K_3$, $2K_2$, $P_4$, $C_4$, $G_1$ and $G_2$. Let's calculate them one by one: $D(K_2,x)=x^2$; $D(P_3,x)=x^3+2x^2=x^2(x+2)$; $D(K_3,x)=x^3+3x^2=x^2(x+3)$; $D(2K_2,x)=x^4+4x^3+2x^2=x^2(x+2-\sqrt{2})(x+2+\sqrt{2})$; $D(P_4,x)=x^4+4x^3+3x^2=x^2(x+1)(x+3)$; $D(C_4,x)=x^4+4x^3+4x^2=x^2(x+2)^2$; $D(G_1,x)=x^4+4x^3+4x^2=x^2(x+2)^2$;  $D(G_2,x)=x^5+5x^4+8x^3+4x^2=x^2(x+1)(x+2)^2$. They are all real-rooted.

{\em Necessity.} Let $G$ be a graph of order $n$ which contains at least one edge. Assume that $D(G,x)$ is real-rooted. By Lemma \ref{lem:Degree>=n-2}, $G$ has an independent set of size $n-2$, denoted by $X$. Set $\{u,v\}:=V(G)\setminus X$. Let's denote $A:=N(u)\cap X$, $B:=N(v)\cap X$ 
and $C:=A\cup B$. Set $a:=|A|$, $b:=|B|$, $c:=|C|$. W.l.o.g., we assume $a\leqslant b$. If $u,v$ are not adjacent, we denote $G$ by $G_{a,b,c}$. Otherwise, we denote it by $G^+_{a,b,c}$ (see Fig. \ref{fig:Guv}).
\begin{figure}[!htbp]
\centering
\scalebox{0.5}[0.5]{\includegraphics{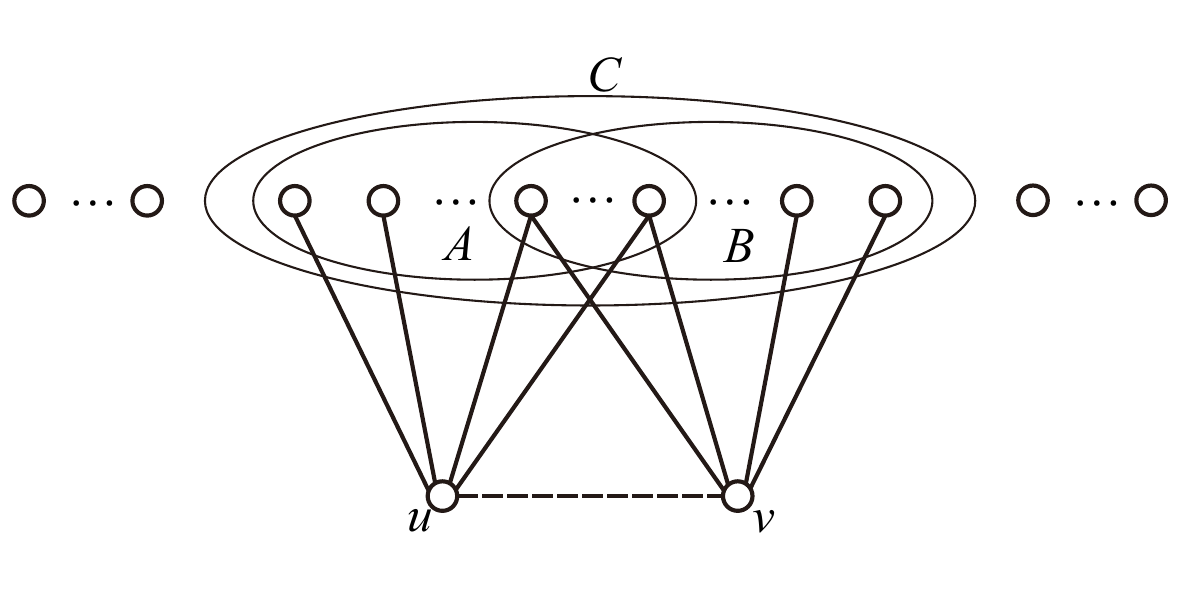}}
\caption{$G^+_{a,b,c}$ or $G_{a,b,c}$: dashed edge represent that $u,v$ are adjacent or not.}{\label{fig:Guv}}
\end{figure}

Now, let's calculate $D(G_{a,b,c},x)$ and $D(G^+_{a,b,c},x)$. For $G\in\{G_{a,b,c},G^+_{a,b,c}\}$, let $d^{(1)}_k(G)$ (resp. $d^{(2)}_k(G)$, or $d^{(3)}_k(G)$) be the number of dependence sets of size $k$ which contains $u$ but not $v$ (resp. $v$ but not $u$, or both $u$ and $v$). Denote $D_1(G,x):=\sum\limits_{k\geqslant 0}d^{(1)}_k(G)x^k$ (resp. $D_2(G,x):=\sum\limits_{k\geqslant 0}d^{(2)}_k(G)x^k$, $D_3(G,x):=\sum\limits_{k\geqslant 0}d^{(3)}_k(G)x^k$). Since $X$ is an independent set, a dependent set of $G$ must contain $u$ or $v$. Thus, $D(G,x)=D_1(G,x)+D_2(G,x)+D_3(G,x)$. Now, we calculate $D_1(G,x)$, $D_2(G,x)$ and $D_3(G,x)$. 
The Product formula of generating function in what follows will be used. 

{\bf The Product formula.} (see \cite{b11}, p. 156, Theorem 8.5) Let $k\geqslant 2$ be an integer. Let $a^{(1)}_m$ be the number of ways to build a certain structure on an $m$-element set, $a^{(2)}_m$ the number of  ways to build the second structure on an $m$-element set, $\cdots$, $a^{(k)}_m$ the number of ways to build the $k$-th structure on an $m$-element set. Let $a_m$ be the number of ways to separate an $m$-element set into the $k$ disjoint subsets $S_1,S_2,\cdots,S_k$ 
($S_1,S_2,\cdots,S_k$ are allowed to be empty), and then to build a structure of the first kind on $S_1$, a structure of the second kind on $S_2$, $\cdots$, a structure of the $k$-th kind on $S_k$. Let $P_1(x),P_2(x),\cdots,P_k(x),P(x)$ be the respective generating functions of the sequences $\{a^{(1)}_m\}$, $\{a^{(2)}_m\}$, $\cdots$, $\{a^{(k)}_m\}$, $\{a_m\}$. Then $P(x)=P_1(x)P_2(x)\cdots P_k(x)$. 

For $G\in\{G_{a,b,c},G^+_{a,b,c}\}$, a dependent set $S$ containing $u$ but not $v$ can be separated into three disjoint parts: $\{u\}$, $S\cap A$ and $S\cap (X\setminus A)$. Firstly, since $u$ must be chosen in $S$, the generating function for the sequence of numbers of ways to choose vertices in $\{u\}$ is $x$. Then, to ensure that $G[S]$ contains at least one edge, $S\cap A\neq\emptyset$, so the generating function for the sequence of numbers of ways to choose vertices in $A$ is $\sum\limits_{k=1}^a{a\choose k}x^k=(x+1)^a-1$. Finally, the generating function for the sequence of numbers of ways to choose vertices in $X\setminus A$ is $\sum\limits_{k=0}^{n-a-2}{n-a-2\choose k}x^k=(x+1)^{n-a-2}$. By the Product formula, $D_1(G,x)=x(x+1)^{n-a-2}[(x+1)^a-1]$. Similarly, $D_2(G,x)=x(x+1)^{n-b-2}[(x+1)^b-1]$.

Now let's consider $D_3(G,x)$. When $G=G_{a,b,c}$, for every dependent set $S$ containing both $u$ and $v$, $S\cap C\neq\emptyset$ since $\{u,v\}$ is an independent set and there must exist at least one vertex in $S\cap C$ to ensure that $G[S]$ contains edges. Similar to the analysis of $D_1(G,x)$, we obtain that $D_3(G_{a,b,c},x)=x^2(x+1)^{n-c-2}[(x+1)^c-1]$. When $G=G^+_{a,b,c}$, since $\{u,v\}$ is a dependent set, the union of $\{u,v\}$ and every subset of $X$ is also a dependent set. Thus, $D_3(G^+_{a,b,c},x)=x^2(x+1)^{n-2}$.

Then 
\begin{eqnarray*}
D(G_{a,b,c},x)&=&D_1(G_{a,b,c},x)+D_2(G_{a,b,c},x)+D_3(G_{a,b,c},x)\\
 &=&x(x+1)^{n-a-2}[(x+1)^a-1]+x(x+1)^{n-b-2}[(x+1)^b-1]\\
 & &{}+x^2(x+1)^{n-c-2}[(x+1)^c-1];\\
D(G^+_{a,b,c},x)&=&D_1(G^+_{a,b,c},x)+D_2(G^+_{a,b,c},x)+D_3(G^+_{a,b,c},x)\\
 &=&x(x+1)^{n-a-2}[(x+1)^a-1]+x(x+1)^{n-b-2}[(x+1)^b-1]+x^2(x+1)^{n-2}.
\end{eqnarray*}

It is obvious that a polynomial $P(x)$ is real-rooted if and only if $P(x-1)$ is real-rooted. Let's consider $D(G_{a,b,c},x-1)$ and $D(G^+_{a,b,c},x-1)$. Noting that $0\leqslant a\leqslant b\leqslant c$, we have
\begin{eqnarray*}
D(G_{a,b,c},x-1)&=&(x-1)x^{n-c-2}(x^{c+1}+x^c-x^{c-a}-x^{c-b}-x+1);\\
D(G^+_{a,b,c},x-1)&=&(x-1)x^{n-b-2}(x^{b+1}+x^b-x^{b-a}-1).
\end{eqnarray*}

Let's denote $Q_1(x):=x^{c+1}+x^c-x^{c-a}-x^{c-b}-x+1$ and $Q_2(x):=x^{b+1}+x^b-x^{b-a}-1$. Then 
$D(G_{a,b,c},x)$ is real-rooted if and only if $Q_1(x)$ is real-rooted; $D(G^+_{a,b,c},x)$ is real-rooted if and only if $Q_2(x)$ is real-rooted. Now, we discuss the real-rootedness of $Q_1(x)$ and $Q_2(x)$.

{\bf Case 1.} The real-rootedness of $Q_1(x)$.

We distinguish two subcases: $b=c$ or not.

{\bf Subcase 1.1.} $b=c$.

When $a=b=c$, $Q_1(x)=x^{c+1}+x^c-x-1=(x+1)(x^c-1)$. It is real-rooted if and only if $c\leqslant 2$. Since $G_{a,b,c}$ contains at least one edge, $c\neq 0$. When $a=b=c=1$, $G_{a,b,c}\cong P_3\cup (n-3)K_1$. When $a=b=c=2$, $G_{a,b,c}\cong C_4\cup (n-4)K_1$.

When $a<b=c$, $Q_1(x)=x^{c+1}+x^c-x^{c-a}-x=x(x-1)(x^{c-1}+2x^{c-2}+2x^{c-3}+\cdots+2x^{c-a-1}+x^{c-a-2}+x^{c-a-3}+\cdots+x+1)$. Let's denote $Q_3(x):=x^{c-1}+2x^{c-2}+2x^{c-3}+\cdots+2x^{c-a-1}+x^{c-a-2}+x^{c-a-3}+\cdots+x+1$. Then $Q_1(x)$ is real-rooted if and only if $Q_3(x)$ is real-rooted. Assume $Q_3(x)$ is real-rooted. By Lemma \ref{lem:3ConsecutiveCoeffiicients}, $c-3\leqslant c-a-1$, i.e., $a\leqslant 2$. If $c-a-3\geqslant 0$, considering the coefficients of the terms $x^{c-a-1}$, $x^{c-a-2}$, $x^{c-a-3}$ of $Q_3(x)$, we obtain $2\cdot 1>1^2$. Therefore, $Q_3(x)$ is not log-concave, a contradiction. Thus, $c\leqslant a+2$. When $a=0$, $b=c=1$, $G_{a,b,c}\cong K_2\cup (n-2)K_1$. When $a=0$, $b=c=2$, $G_{a,b,c}\cong P_3\cup (n-3)K_1$. When $a=1$, $b=c=2$, $G_{a,b,c}\cong P_4\cup (n-4)K_1$. When $a=1$, $b=c=3$, $G_{a,b,c}\cong G_2\cup (n-5)K_1$. When $a=2$, $b=c=3$,  $Q_3(x)=x^2+2x+2$, which is not real-rooted, a contradiction. When $a=2$, $b=c=4$, $Q_3(x)=x^3+2x^2+2x+1=(x+1)(x^2+x+1)$, which is not real-rooted, a contradiction.

{\bf Subcase 1.2.} $b<c$.

In this subcase, $Q_1(x)=x^{c+1}+x^c-x^{c-a}-x^{c-b}-x+1=(x-1)(x^c+2x^{c-1}+2x^{c-2}+\cdots+2x^{c-a}+x^{c-a-1}+x^{c-a-2}+\cdots+x^{c-b}-1)$. Let's denote $Q_4(x):=x^c+2x^{c-1}+2x^{c-2}+\cdots+2x^{c-a}+x^{c-a-1}+x^{c-a-2}+\cdots+x^{c-b}-1$. Then $Q_1(x)$ is real-rooted if and only if $Q_4(x)$ is real-rooted. Assume $Q_4(x)$ is real-rooted. Similar to Subcase 1.1, by Lemma \ref{lem:3ConsecutiveCoeffiicients}, $c-2\leqslant c-a$, i.e., $a\leqslant 2$. If $c-a-3\geqslant 0$, the derivative $Q'_4(x)=cx^{c-1}+2(c-1)x^{c-2}+\cdots+2(c-a)x^{c-a-1}+(c-a-1)x^{c-a-2}+(c-a-2)x^{c-a-3}+\cdots+(c-b)x^{c-b-1}$. By Proposition \ref{pro:Derivative}, $Q'_4(x)$ is real-rooted. Considering the coefficients of the terms $x^{c-a-1}$, $x^{c-a-2}$, $x^{c-a-3}$ of $Q'_4(x)$, we obtain $2(c-a)(c-a-2)=2(c-a-1)^2-2\geqslant (c-a-1)^2+2>(c-a-1)^2$. Therefore, $Q'_4(x)$ is not log-concave, a contradiction. Thus, $c\leqslant a+2$. Since $b<c\leqslant a+b$, it can be deduced that $a\neq 0$. When $a=1$, $b=1$, $c=2$, $G_{a,b,c}\cong 2K_2\cup (n-4)K_1$. When $a=1$, $b=2$, $c=3$, $Q_4(x)=x^3+2x^2+x-1$. By analyzing the cubic function, $Q_4(x)$ has only one real root, a contradiction. When $a=2$, $b=2$, $c=3$, $Q_4(x)=x^3+2x^2+2x-1$. $Q'_4(x)=3x^2+4x+2$, which is not real-rooted, a contradiction with Proposition \ref{pro:Derivative}. When $a=2$, $b=2$, $c=4$, $Q_4(x)=x^4+2x^3+2x^2-1=(x+1)(x^3+x^2+x-1)$. $(x^3+x^2+x-1)'=3x^2+2x+1$ is not real-rooted, neither is $x^3+x^2+x-1$, a contradiction with the real-rootedness of $Q_4(x)$. When $a=2$, $b=3$, $c=4$, $Q_4(x)=x^4+2x^3+2x^2+x-1$. $Q'_4(x)=4x^3+6x^2+4x+1=(2x+1)(2x^2+2x+1)$, which is not real-rooted, a contradiction with Proposition \ref{pro:Derivative}. 

{\bf Case 2.} The real-rootedness of $Q_2(x)$.

We distinguish two subcases: $a=0$ or $a\geqslant 1$.

{\bf Subcase 2.1.} $a=0$.

In this subcase, $Q_2(x)=x^{b+1}-1$. It is real-rooted if and only if $b=0$ or 1. When $b=0$, $G^+_{a,b,c}\cong K_2\cup (n-2)K_1$. When $b=1$, $G^+_{a,b,c}\cong P_3\cup (n-3)K_1$. 

{\bf Subcase 2.2.} $a\geqslant 1$.

In this subcase, $Q_2(x)=x^{b+1}+x^b-x^{b-a}-1=(x-1)(x^b+2x^{b-1}+2x^{b-2}+\cdots+2x^{b-a}+x^{b-a-1}+x^{b-a-2}+\cdots+x+1)$. Let's denote $Q_5(x):=x^b+2x^{b-1}+2x^{b-2}+\cdots+2x^{b-a}+x^{b-a-1}+x^{b-a-2}+\cdots+x+1$. Then $Q_2(x)$ is real-rooted if and only if $Q_5(x)$ is real-rooted. Assume $Q_5(x)$ is real-rooted. Similarly, we can deduce that $a\leqslant 2$ and $b-a=0$ or 1. When $a=1$, $b=1$, $G^+_{a,b,c}\cong K_3 \cup (n-3)K_1$ if $c=1$, and $G^+_{a,b,c}\cong P_4 \cup (n-4)K_1$ if $c=2$. When $a=1$, $b=2$, $G^+_{a,b,c}\cong G_1 \cup (n-4)K_1$ if $c=2$, and $G^+_{a,b,c}\cong G_2 \cup (n-5)K_1$ if $c=3$. When $a=2$, $b=2$, $Q_5(x)=x^2+2x+2$, which is not real-rooted, a contradiction. When $a=2$, $b=3$, $Q_5(x)=x^3+2x^2+2x+1=(x+1)(x^2+x+1)$, which is not real-rooted, a contradiction.

In conclusion, the proof is completed.
\end{proof}

\section{Conclusions and future work}
In the present paper, we proved that, for a graph $G$ of order $n$, the dependence polynomial $D(G,x)$ is ultra log-concave if $G$ is a $K_2\cup 2K_1$-free graph or has an independent set of size $n-2$. And further, the graphs with real-rooted dependence polynomials are characterized, which is exactly one of the following eight graphs: $K_2\cup (n-2)K_1$, $P_3\cup (n-3)K_1$, $K_3\cup (n-3)K_1$, $2K_2\cup (n-4)K_1$, $P_4\cup (n-4)K_1$, $C_4\cup (n-4)K_1$, $G_1\cup (n-4)K_1$, $G_2\cup (n-5)K_1$. 

Combined with the proof of Theorem \ref{thm:ULC}, we obtain that, for a graph $G$, $D(G,x)$ is ultra log-concave if and only if $\det(\nabla^2q_k)\leqslant 0$. Observing (\ref{eq:sum2}), (\ref{eq:MatrixRepresentation}) and (\ref{eq:MatrixA}), we notice that all the coefficients of the quadratic form (\ref{eq:MatrixRepresentation}) is nonpositive except the one of $t_1t_8$. Thus, if $\det(\nabla^2q_k)>0$, $t_i$ ($i=2,3,\cdots,7$) must be much less than $t_1$ and $t_8$. We believe that there are no graphs satisfying such condition, so we conjecture:
\begin{con}\label{con:ULCofDependencePolynomials}
For every graph $G$, $D(G,x)$ is ultra log-concave.
\end{con}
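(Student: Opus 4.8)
The plan is to remove the two extra hypotheses in Lemma~\ref{lem:Hessian<=0} and establish $\det(\nabla^2 q_k)\leqslant 0$ for \emph{every} graph $G$; by the proof of Theorem~\ref{thm:ULC} this is equivalent to Conjecture~\ref{con:ULCofDependencePolynomials}. Via (\ref{eq:sum2})--(\ref{eq:MatrixA}) the whole problem collapses to the single scalar inequality $\mathbf{t}^{\mathsf{T}}A\mathbf{t}\leqslant 0$, where $\mathbf{t}=(t_1,\dots,t_8)$ records, for a fixed $k$, how many co-dependent $(k-1)$-sets $S$ produce each local type $H_1,\dots,H_8$ of the residual graph $G-S$. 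It is worth keeping in mind that $S\in\overline{\mathcal{D}}(G)$ exactly when $S$ is \emph{not} a vertex cover, so $\overline{\mathcal{D}}(G)$ is the complex of non-covers and the $t_i$ classify $G-S$ by how robustly it retains an edge. I would work entirely with the vectors $\mathbf{t}$ that are genuinely \emph{realizable} by a graph and try to show the form is nonpositive on all of them.

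The key structural fact is already visible in (\ref{eq:MatrixA}): every entry of $A$ is nonpositive except the corner $A_{18}=A_{81}=r(r-1)>0$. Hence $\mathbf{t}^{\mathsf{T}}A\mathbf{t}=2r(r-1)\,t_1t_8-N(\mathbf{t})$, where $N(\mathbf{t})\geqslant 0$ collects all the remaining, nonpositively-weighted terms. Lemma~\ref{lem:Hessian<=0} succeeds precisely because its hypotheses force $t_1=0$ (no residual $K_2\cup(r-2)K_1$) or $t_8=0$ (no residual graph free of co-independent vertices and pairs), annihilating the lone offending term. To handle all graphs I would instead prove the sharp inequality $2r(r-1)\,t_1t_8\leqslant N(\mathbf{t})$ directly.

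The heart of the argument is then a combinatorial relation among the $t_i$, since these counts are far from independent: $t_1$ witnesses an \emph{almost edgeless} residual graph ($G-S\cong K_2\cup(r-2)K_1$), whereas $t_8$ witnesses a \emph{robustly covered} one ($G-S'$ keeps an edge after deleting any one or two of its vertices). I would aim to show that the simultaneous abundance of both extremes inside a single $G$ forces a proportionally large supply of the intermediate types $t_2,\dots,t_7$, so that the strongly negative coefficients of (\ref{eq:MatrixA}) (for instance the $-8r(r-1)$ weighting $t_4^2$ and the heavy cross terms) take over. The natural tools are double counting and exchange arguments inside $\overline{\mathcal{D}}(G)$: swapping vertices between a type-$H_1$ set and a type-$H_8$ set along the faces of the complex ought to manufacture enough sets of intermediate type to make those negative terms outweigh $2r(r-1)\,t_1t_8$.

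A more conceptual route would bypass the explicit matrix $A$ by proving that the multivariate form $g_G$ is \emph{Lorentzian}: together with M-convexity of its support and the one-positive-eigenvalue condition on every Hessian $\nabla^2 q_k^S$, this would make the diagonal specialization $f_G(y,z)=g_G(y,z,\dots,z)$ Lorentzian and hence force $\{\bar d_k\}$ to be ultra log-concave. The obstruction here is that $\overline{\mathcal{D}}(G)$ is not a matroid, so the Hodge-theoretic machinery that guarantees Lorentzianness for matroids is unavailable and both conditions must be verified by hand. Either way the genuine difficulty is identical, and is exactly the one flagged in the paper: controlling the single positive direction, i.e.\ proving that $t_2,\dots,t_7$ can never be too small relative to $t_1t_8$. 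Producing the right quantitative lower bound on the intermediate counts — equivalently, ruling out a graph whose residual subgraphs are predominantly of the two extreme types $H_1$ and $H_8$ with almost nothing in between — is where I expect essentially all the work to lie.
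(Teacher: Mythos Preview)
The statement you are attempting is \emph{Conjecture}~\ref{con:ULCofDependencePolynomials}: the paper does not prove it and presents it explicitly as open. The only thing the paper offers beyond Theorem~\ref{thm:ULC} is the heuristic remark, just before the conjecture, that every entry of $A$ in (\ref{eq:MatrixA}) is nonpositive except $A_{18}=A_{81}$, so a putative counterexample would need $t_2,\dots,t_7$ to be very small relative to $t_1$ and $t_8$; the authors say they ``believe that there are no graphs satisfying such condition'' and leave it at that. Your proposal recovers exactly this reduction and this heuristic, and it correctly names the obstacle (the single positive cross term $2r(r-1)t_1t_8$), but it does not go any further than the paper does.

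Concretely, nothing in your write-up is a proof. The two routes you describe --- (a) a double-counting/exchange argument showing that co-existence of type-$H_1$ and type-$H_8$ residuals forces enough intermediate types to make $N(\mathbf{t})\geqslant 2r(r-1)t_1t_8$, and (b) showing $g_G$ is Lorentzian --- are both stated as intentions (``I would aim to show'', ``ought to manufacture'', ``is where I expect essentially all the work to lie'') with no actual inequality established. For route (a) you would need a precise lower bound on some combination of $t_2,\dots,t_7$ in terms of $t_1t_8$ strong enough to beat the explicit coefficients in (\ref{eq:MatrixA}); you have not produced such a bound, nor even a candidate exchange map. For route (b) you yourself note that $\overline{\mathcal{D}}(G)$ is only a $2$-matroid, not a matroid, so neither M-convexity of the support nor the signature condition on $\nabla^2 q_k^S$ is known, and you give no argument for either. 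In short, your proposal accurately restates the problem and its known reduction, but the genuine gap --- the quantitative control of $t_1t_8$ by the intermediate counts --- remains exactly as open in your text as it is in the paper.
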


For the graph polynomials associated with co-hereditary graph properties, Makowsky and Rakita \cite{mr23} showed that these polynomials are unimodal for almost all graphs. Meanwhile, they asked: Under what conditions can almost unimodality be improved to unimodality or log-concavity?

Let $G$ be a graph with $V(G)=[n]$ and $\mathcal{A}$ a co-hereditary graph property. Let $\mathcal{I}$ denote the family of sets of vertices $S$ satisfying $G-S\in\mathcal{A}$, i.e., $\mathcal{I}:=\{S\subseteq V(G)|G-S\in\mathcal{A}\}$. Since $\mathcal{A}$ is co-hereditary, it is easy to see that $([n],\mathcal{I})$ is an independence system. Let's denote $\mathcal{A}_{\min}:=\{H\in\mathcal{A}|\mbox{any induced subgraph of }H$ $\mbox{is not in }\mathcal{A}\}$. Assume $\max\limits_{H\in\mathcal{A}_{\min}}|V(H)|$ is finite and denote $l:=\max\limits_{H\in\mathcal{A}_{\min}}|V(H)|$. We have
\begin{pro}\label{pro:ExchangeProperty}
$G$, $\mathcal{A}$, $\mathcal{I}$ and $l$ are defined above. Then the independence system $([n],\mathcal{I})$ satisfies the following condition:
\begin{equation}
\text{For any }S,T\in\mathcal{I}\text{ where }|T|\geqslant |S|+l\text{, there exists }x\in T\setminus S\text{ such that }S\cup\{x\}\in\mathcal{I}. \tag{$\ast$}
\end{equation}
\end{pro}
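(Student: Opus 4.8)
The plan is to argue by contradiction, using two elementary consequences of co-hereditariness. First I would record that, since $\mathcal{A}$ is co-hereditary (equivalently $\bar{\mathcal{A}}$ is hereditary), for any $Y\subseteq V(G)$ one has $G[Y]\in\mathcal{A}$ \emph{if and only if} $Y$ contains some subset $W$ with $G[W]\in\mathcal{A}$: the forward direction is trivial, and the backward direction is exactly the statement that $G[W]$ is an induced subgraph of $G[Y]$ with $G[W]\in\mathcal{A}$. Second, every graph in $\mathcal{A}$ contains an induced subgraph lying in $\mathcal{A}_{\min}$ of order at most $l$: starting from any $H\in\mathcal{A}$, repeatedly pass to a proper induced subgraph still in $\mathcal{A}$ until this is impossible; finiteness guarantees termination at a member of $\mathcal{A}_{\min}$, which has at most $l$ vertices by definition of $l$. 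These two facts let me translate membership in $\mathcal{I}$ into the existence of small witnessing vertex sets.

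Writing $U:=V(G)\setminus S$, so that $G[U]=G-S\in\mathcal{A}$, the goal becomes finding $x\in T\setminus S$ with $G[U\setminus\{x\}]\in\mathcal{A}$. Suppose no such $x$ exists. Then for every $x\in T\setminus S$ we have $G[U\setminus\{x\}]\notin\mathcal{A}$, so by the equivalence above $U\setminus\{x\}$ contains no subset inducing a graph in $\mathcal{A}$; equivalently, every $W\subseteq U$ with $G[W]\in\mathcal{A}$ must contain $x$. Running this over all $x\in T\setminus S$ shows that \emph{every} such $W$ contains the whole of $T\setminus S$.

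Next I would apply this to a minimal witness. Since $G[U]\in\mathcal{A}$, the set $U$ contains some $W_0$ with $G[W_0]\in\mathcal{A}_{\min}$ and $|W_0|\leqslant l$. By the previous step $T\setminus S\subseteq W_0$, whence $l\leqslant|T\setminus S|\leqslant|W_0|\leqslant l$, forcing $|T\setminus S|=|W_0|=l$ and therefore $T\setminus S=W_0$. The size hypothesis now does the decisive work: from $|T\setminus S|=|T|-|T\cap S|=l$ together with $|T|\geqslant|S|+l$ I obtain $|T\cap S|\geqslant|S|$, i.e.\ $S\subseteq T$.

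Finally I would extract the contradiction from $T\in\mathcal{I}$. Because $S\subseteq T$ we have $V(G)\setminus T\subseteq U$, and $G[V(G)\setminus T]=G-T\in\mathcal{A}$, so $V(G)\setminus T$ is a subset of $U$ inducing a graph in $\mathcal{A}$; by the second paragraph it must contain $T\setminus S$. But $V(G)\setminus T$ is disjoint from $T\supseteq T\setminus S$, and $T\setminus S\neq\emptyset$ since $|T\setminus S|=l\geqslant 1$ (the empty graph is not in $\mathcal{A}$ for the properties of interest). This contradiction completes the proof. I expect the crux to be precisely this ``tight'' configuration: a priori a minimal $\mathcal{A}$-witness inside $U$ might intersect $S$, and the genuine difficulty is ruling out the case where $T\setminus S$ is itself the unique minimal witness. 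The hypothesis $|T|\geqslant|S|+l$ is exactly what resolves it, since it forces $S\subseteq T$ and thereby makes the witness $V(G)\setminus T$ supplied by $T\in\mathcal{I}$ disjoint from $T\setminus S$.
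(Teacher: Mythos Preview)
Your argument is correct and shares the paper's one essential idea: inside $G-S$ there sits a minimal $\mathcal{A}$-witness $W_0$ of size at most $l$. The organization differs, however. The paper argues directly and splits into two easy cases. If $S\subset T$, then any $x\in T\setminus S$ gives $S\cup\{x\}\subseteq T\in\mathcal{I}$, and downward closure of $\mathcal{I}$ finishes immediately. If $S\not\subset T$, then $|S\cap T|\leqslant|S|-1$ forces $|T\setminus S|\geqslant l+1$; hence some $x\in T\setminus S$ misses $W_0$, so $W_0\subseteq (V(G)\setminus S)\setminus\{x\}$ and $S\cup\{x\}\in\mathcal{I}$.

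You reach the same conclusion by contradiction, deducing $T\setminus S=W_0$ and then $S\subseteq T$, and finally invoking the witness provided by $T\in\mathcal{I}$ to contradict $T\setminus S\subseteq W$ for every admissible $W$. This is valid but more circuitous than necessary: once you have $S\subseteq T$, the one-line downward-closure argument already gives $S\cup\{x\}\in\mathcal{I}$ for any $x\in T\setminus S$, without appealing to a second witness. Your parenthetical about the empty graph is also superfluous: the setup has $l\in\mathds{N}^*$, so $|T\setminus S|=l\geqslant 1$ automatically. In short, both proofs hinge on the same pigeonhole against a bounded-size witness; the paper's direct case split is shorter, while your contradiction route works but does extra bookkeeping in the boundary case.
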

\begin{proof}
If $S\subset T$, since $T$ is an independence set, $S\cup\{x\}\in\mathcal{I}$ for any $x\in T\setminus S$. Otherwise, $|T\setminus S|\geqslant l+1$, so $|S|\leqslant |S\cup T|-(l+1)\leqslant |V(G)|-(l+1)$. Since $S\in\mathcal{I}$, by the definition, $G-S\in\mathcal{A}$, that is, there is an induced subgraph of $G-S$ in $\mathcal{A}_{\min}$, denoted by $H$. By the definition of $l$, $|V(H)|\leqslant l$. Recall that $|T\setminus S|\geqslant l+1$, so there must exist a vertex $x\in T\setminus S$ and $x\not\in V(H)$. For such $x$, $H$ is still an induced subgraph of $G-(S\cup\{x\})$, so $S\cup\{x\}\in\mathcal{I}$. 
\end{proof}

More abstractly, for $l\in\mathds{N}^*$, we call an independence system $\mathcal{M}:=([n],\mathcal{I})$ satisfying ($*$) an {\em $l$-matroid}. By this definition, we can see that if $\mathcal{M}$ is an $l_1$-matroid, $\mathcal{M}$ is also an $l_2$-matroid for all $l_2>l_1$. In particular, 1-matroids are commonly referred to as {\em matroids}. For matroids, the following famous conjecture was posed by Mason:
\begin{con}[Mason's Conjecture]\label{con:MasonConjecture}{\em\cite{m72}}
Let $\mathcal{M}$ be an $n$-element matroid and $i_k$ the number of independent sets of size $k$. Then
\begin{enumerate}
\renewcommand{\labelenumi}{(\roman{enumi})}
\item the sequence $\{i_k\}_{k=0}^n$ is log-concave;
\item the sequence $\{i_k\}_{k=0}^n$ is ordered log-concave;
\item the sequence $\{i_k\}_{k=0}^n$ is ultra log-concave.
\end{enumerate}
\end{con}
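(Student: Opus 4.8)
The plan is to run exactly the Hessian scheme of Section 2 at the level of the matroid $\mathcal{M}=([n],\mathcal{I})$, replacing the finite case analysis of Lemma \ref{lem:Hessian<=0} by the intrinsic structure of matroids. For a matroid it is natural to work directly with the family $\mathcal{I}$ of independent sets in place of the co-dependent family $\overline{\mathcal{D}}$; this is again an independence system, and genuine matroid contraction plays the role of Observation \ref{obs:ContractionProperty}. I would introduce the multivariate generating polynomial $g_{\mathcal{M}}(y,z_1,\ldots,z_n):=\sum_{I\in\mathcal{I}}y^{n-|I|}\prod_{i\in I}z_i$ together with its diagonal specialization $f_{\mathcal{M}}(y,z):=g_{\mathcal{M}}(y,z,\ldots,z)=\sum_{k=0}^n i_k\,y^{n-k}z^k$. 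Repeating the derivation of (\ref{eq:qk})--(\ref{eq:sum2}) verbatim with $\bar d_k$ replaced by $i_k$, part (iii) of the conjecture --- which, since $\{i_k\}_{k=0}^n$ has no internal zeros up to the rank, implies (ii) and (i) by the ever-stronger hierarchy --- is equivalent to the inequalities $\det(\nabla^2q_k)\leqslant 0$ for $1\leqslant k\leqslant n-3$.

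The next step is to analyze the local forms $q_k^S=\partial_y^{n-k-1}\partial^S g_{\mathcal{M}}$ for $S\in\mathcal{I}$ with $|S|=k-1$. By contraction, $q_k^S$ records the rank-one and rank-two independent sets of $\mathcal{M}/S$, so its Hessian is a non-negative symmetric matrix of the block shape displayed in (\ref{eq:qS}), whose off-diagonal block $B$ is the adjacency matrix of the graph whose edges are the independent pairs of $\mathcal{M}/S$. The decisive observation is that in any matroid parallelism is an equivalence relation, so in the rank-two truncation the independent pairs are exactly the pairs of elements lying in distinct parallel classes; hence the independent-pair graph is \emph{complete multipartite}, and such graphs are precisely the graphs possessing a single positive adjacency eigenvalue. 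This is the conceptual substitute for the eight-case computation of Lemma \ref{lem:Hessian<=0}: the exchange axiom forces every rank-two minor into the one-positive-eigenvalue regime automatically, whereas for dependence polynomials $\overline{\mathcal{D}}$ is not a matroid and the possible local shapes had to be enumerated by hand.

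The conclusion I am after is that $g_{\mathcal{M}}$ is a \emph{Lorentzian} polynomial in the sense of \cite{algv18}, i.e.\ that $\nabla^2(\partial^\alpha g_{\mathcal{M}})$ has at most one positive eigenvalue for every $\alpha$ with $|\alpha|=n-2$. Granting this, the desired bound follows cleanly: the substitution $z_i\mapsto z$ is a non-negative linear change of variables, under which the Lorentzian class is closed, so $f_{\mathcal{M}}(y,z)$ is bivariate Lorentzian, and a homogeneous bivariate Lorentzian polynomial has ultra log-concave coefficients, which is exactly $\det(\nabla^2q_k)\leqslant 0$. It is worth stressing why the per-set estimates cannot supply this directly. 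Writing $\sigma_S:=\sum_i a_i$ and $\tau_S:=\sum_{i,j}b_{ij}$ as in (\ref{eq:QS}) and $t$ for the number of independent sets of size $k-1$, the per-set condition $\det Q_S\leqslant 0$ reads $(n-k+1)\tau_S\leqslant(n-k)\sigma_S^2$, while $\det(\nabla^2q_k)\leqslant 0$ reads $t(n-k+1)\sum_S\tau_S\leqslant(n-k)\big(\sum_S\sigma_S\big)^2$; the latter does not follow from the former, because Cauchy--Schwarz only bounds $\big(\sum_S\sigma_S\big)^2$ from above by $t\sum_S\sigma_S^2$, which is the wrong direction.

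Therefore the main obstacle is precisely the propagation of the one-positive-eigenvalue condition from the rank-two base case to all iterated derivatives, i.e.\ the closure of the Lorentzian class under $\partial_{z_i}$ for this particular polynomial. The rank-two case is elementary, but passing it through contractions to every minor of $\mathcal{M}$ cannot be done by the bookkeeping of (\ref{eq:sum})--(\ref{eq:sum2}); it is exactly the Hodge--Riemann-type inequalities for matroids, proved in \cite{algv18} by a deletion--contraction induction on the ground set. In short, the framework of Section 2 reduces Mason's Conjecture to the Lorentzian property of $g_{\mathcal{M}}$, and that property is the genuinely deep ingredient, supplied by \cite{algv18}.
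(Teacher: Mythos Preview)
The paper contains no proof of this statement: Conjecture \ref{con:MasonConjecture} is stated as historical background and the paper merely records that (i)--(iii) were established in \cite{ahk18,hsw22,algv18,bh18,cp22}. There is therefore nothing to compare your proposal against on the paper's side.

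As to the content of your proposal: it is accurate as a roadmap, and you are candid that it is not a self-contained argument. Your reduction of (iii) to the Lorentzian property of $g_{\mathcal{M}}$ is correct, and the observation that the rank-two independent-pair graph of any matroid minor is complete multipartite (hence has a single positive adjacency eigenvalue) is the right base case. But, as you yourself say in the last paragraph, the passage from the rank-two base case to the full statement that every quadratic derivative $\partial^\alpha g_{\mathcal{M}}$ has Hessian with at most one positive eigenvalue is precisely the theorem of \cite{algv18}, not a consequence of anything in this paper. So your proposal is not a proof of Mason's Conjecture; it is a (correct) explanation of how the scaffolding of Section~2 specializes to the matroid setting and where the genuine input from \cite{algv18} enters. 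If your intent was to give an independent proof, the gap is exactly the one you name: closure of the Lorentzian class under $\partial_{z_i}$ for $g_{\mathcal{M}}$, which the machinery of (\ref{eq:qk})--(\ref{eq:sum2}) does not supply.
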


Adiprasito, Huh, and Katz \cite{ahk18} proved (i) using techniques from Hodge theory and algebraic geometry in 2018. (ii) was proved by Huh, Schr\"oter and Wang \cite{hsw22} in 2022. More recently, (iii) was proved by Anari et al. \cite{algv18}, Br\"and\'en and Huh \cite{bh18}, and Chan and Pak \cite{cp22} in different methods independently. Motivated by the discussions above, our question is: under what conditions can the analogous version of Mason's Conjecture hold true for $l$-matroids? By Proposition \ref{pro:ExchangeProperty}, it is a generalization of Makowsky and Rakita's question. For the case of $l=2$, we conjecture:
\begin{con}\label{con:LCof2matroids}
Let $\mathcal{M}$ be an $n$-element $2$-matroid, $i_k$ the number of independent sets of size $k$ in $\mathcal{M}$. Then
\begin{enumerate}
\renewcommand{\labelenumi}{(\roman{enumi})}
\item the sequence $\{i_k\}_{k=0}^{n}$ is log-concave;
\item the sequence $\{i_k\}_{k=0}^{n}$ is ordered log-concave;
\item the sequence $\{i_k\}_{k=0}^{n}$ is ultra log-concave.
\end{enumerate}
\end{con}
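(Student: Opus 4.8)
The plan is to mirror the Hessian method used for Theorem \ref{thm:ULC}, carried out now over an abstract $2$-matroid $\mathcal{M}=([n],\mathcal{I})$ rather than over the co-dependent sets of a graph; since (iii) implies (i) and (ii), I would aim directly at ultra log-concavity. First I would attach to $\mathcal{M}$ the homogeneous generating polynomial $g_{\mathcal{M}}(y,z_1,\dots,z_n):=\sum_{S\in\mathcal{I}}y^{\,n-|S|}\prod_{i\in S}z_i$, exactly as $g_G$ is built in the proof of Lemma \ref{lem:Hessian<=0}. Differentiating and specializing as in (\ref{eq:qk})--(\ref{eq:sum}) is purely formal and uses nothing about graphs, so the reduction goes through verbatim: ultra log-concavity of $\{i_k\}_{k=0}^n$ is equivalent to $\det(\nabla^2 q_k)\leqslant 0$ for $1\leqslant k\leqslant n-3$, where $\nabla^2 q_k=(n-k-1)!\sum_{S\in\mathcal{I}_{k-1}}Q_S$ and, writing $r:=n-k+1$, $A_S$ for the number of $i$ with $S\cup\{i\}\in\mathcal{I}$ and $P_S$ for the number of pairs $\{i,j\}$ with $S\cup\{i,j\}\in\mathcal{I}$, one has $Q_S=\left[\begin{smallmatrix}(r-1)r & (r-1)A_S\\ (r-1)A_S & 2P_S\end{smallmatrix}\right]$.

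The second step is a structural reduction that makes the method applicable: I would show that both the contraction $\mathcal{I}/S:=\{R\subseteq V\setminus S: R\cup S\in\mathcal{I}\}$ and the deletion $\mathcal{I}\setminus e$ of a $2$-matroid are again $2$-matroids (each follows in one line from condition $(\ast)$ of Proposition \ref{pro:ExchangeProperty}). Hence every $Q_S$ is governed by the low-degree data $(A_S,P_S)=(j_1,j_2)$ of the link $2$-matroid $\mathcal{I}/S$ on $r$ elements, and I would classify which profiles $(A_S,P_S)$ can occur. This is where the abstract setting departs from the graph case: in Lemma \ref{lem:Hessian<=0} the number of co-independent vertices of a link is at most two, yielding only the eight matrices $H_1,\dots,H_8$, whereas for a general $2$-matroid a singleton link may have arbitrarily many non-extensions (for instance $\mathcal{I}=\{\emptyset,\{1\},\{2\},\{3\},\{4\},\{2,3\},\{2,4\},\{3,4\}\}$ is a $2$-matroid in which $\{1\}$ has three non-extensions), so the admissible profiles form a larger, $r$-dependent family rather than a fixed finite list.

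The third and decisive step is to prove the global inequality $\det\big(\sum_{S}Q_S\big)\leqslant 0$. Because every $Q_S$ shares the same $(1,1)$-entry $(r-1)r$, the determinant collapses to a quadratic form in the three aggregate quantities $\sum_S 1=i_{k-1}$, $\sum_S A_S=k\,i_k$ and $\sum_S P_S=\binom{k+1}{2}i_{k+1}$, and it equals, up to a positive factor, the ultra-log-concavity discriminant of $\{i_k\}$ itself; this is precisely why a per-$S$ bound $\det Q_S\leqslant 0$ cannot suffice, since summing such bounds runs Cauchy--Schwarz in the wrong direction. Following the shape of (\ref{eq:sum2})--(\ref{eq:MatrixA}), I would group the sets $S$ by profile and write the determinant as $\mathbf{t}^{\mathsf{T}}A\mathbf{t}$ in the profile-multiplicities $\mathbf{t}$; the analysis there shows that every coefficient of this form is nonpositive except the single cross term coupling the sparsest link (the analogue of $H_1$, few extensions) with the densest link (the analogue of $H_8$, all pairs present). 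Thus the whole conjecture rests on showing that this lone positive cross term is always dominated by the negative ones.

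This last point is exactly the main obstacle, and it is what forces Theorem \ref{thm:ULC} to assume either $t_1=0$ (no sparse link, guaranteed by $K_2\cup 2K_1$-freeness) or $t_8=0$ (no dense link, guaranteed by an independent set of size $n-2$). To remove both hypotheses one must find a genuinely global constraint of the $2$-matroid that limits the simultaneous abundance of sparse and dense links; I would search for it among double-counting identities relating $i_{k-1},i_k,i_{k+1}$ and among consequences of $(\ast)$ applied across pairs of links, since no purely local argument can detect this coupling. A route through Lorentzian polynomials in the spirit of \cite{bh18} looks unpromising, because the support of the independent sets of a $2$-matroid need not be M-convex and so $g_{\mathcal{M}}$ need not be Lorentzian; at best one might hope to recover the weaker part (i) by a Horrocks-type injective argument and then settle (ii) and (iii) by the Hessian method above once the cross-term domination is established.
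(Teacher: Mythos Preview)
The statement you are attempting to prove is Conjecture~\ref{con:LCof2matroids}: the paper does \emph{not} prove it, so there is no proof to compare against. What the paper offers is only the heuristic you yourself reproduce --- namely, that in the graph setting all entries of the matrix $A$ in (\ref{eq:MatrixA}) are nonpositive except the $t_1t_8$ coefficient, and the authors ``believe that there are no graphs satisfying such condition''.

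Your proposal correctly transports the Hessian machinery of Lemma~\ref{lem:Hessian<=0} to an abstract $2$-matroid: the contraction/deletion stability, the identities $\sum_S A_S=k\,i_k$ and $\sum_S P_S=\binom{k+1}{2}i_{k+1}$, and the tautological equivalence between $\det(\nabla^2 q_k)\leqslant 0$ and ultra log-concavity at index $k$ are all fine. You also correctly observe that the per-link classification becomes strictly larger than $\{H_1,\dots,H_8\}$, since the Claim in the proof of Lemma~\ref{lem:Hessian<=0} (at most two co-independent vertices) genuinely uses the graph structure and fails for abstract $2$-matroids, as your four-element example shows.

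The genuine gap is the one you flag yourself: the decisive step --- bounding the positive cross term(s) by the negative ones --- is not carried out, and you only propose to ``search for'' a suitable global constraint. That is exactly where the paper stops too, and in the abstract setting the situation is worse, not better: with a larger (and $r$-dependent) family of admissible link profiles, the quadratic form $\mathbf{t}^{\mathsf{T}}A\mathbf{t}$ may acquire additional positive coefficients beyond the single $t_1t_8$ term of (\ref{eq:MatrixA}), so even the structural statement ``only one cross term is positive'' would need a fresh argument. In short, your plan is a faithful and well-understood extension of the paper's method, but it is a research programme rather than a proof, and the obstacle you isolate is precisely the open problem the conjecture records.
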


Recall $\overline{\mathcal{D}}$ the family of co-dependent sets in a graph $G$. Then $(V(G),\overline{\mathcal{D}})$ is a 2-matroid. Thus, Conjecture \ref{con:LCof2matroids} is a stronger version of Conjecture \ref{con:ULCofDependencePolynomials}.

It is remarkable that for $3$-matroids, the analogous conjecture is not true. Let's give a counterexample:
\begin{exa}\label{exa:CounterExample}
Assume $\mathcal{M}$ is a $3$-matroid on a $10$-element ground set, where $\mathcal{I}:=\{\emptyset,\{1\},\{2\},$ $\{3\},\{4\},\{5\},\{6\},\{7\},\{8\},\{9\},\{10\},\{1,2\},\{1,3\},\{2,3\},\{1,2,3\}\}$. Then $i_1=10$, $i_2=3$ and $i_3=1$. $i_2^2<i_1i_3$, so the sequence $\{i_k\}_{k=0}^{10}$ is not log-concave.
\end{exa} 

Let's consider this problem with more discernment. For a finite independence system $\mathcal{M}$, let $r(\mathcal{M})$ be the size of maximum independent sets in $\mathcal{M}$. If $l\geqslant r(\mathcal{M})$, the condition ($*$) is trivial. In other words, every finite independence system can be thought of as an $l$-matroid for some sufficiently large $l$. By Example \ref{exa:CounterExample}, the analogous version of Mason's Conjecture does not hold true for general independence systems. More counterexamples are the independence systems consisted by independent sets of graphs, e.g. see \cite{amse87,bk13,kl23,klym23,mt03}. Thus, the case of interest to us is that $l$ is much less than $r(\mathcal{M})$. 

For a graph $G$, let $\mathcal{I}_G$ be the family of independent sets of $G$. $\mathcal{M}:=(V(G),\mathcal{I}_G)$ is an independence system. In this case, $r(\mathcal{M})$ is the size of maximum independent sets of $G$, which is denoted by $\alpha(G)$ usually. For the independence systems consisted by independent sets of graphs, let's give one more example. A {\em claw-free graph} is a graph containing no $K_{1,3}$ as an induced subgraph.  The following theorem is well-known:
\begin{thm}\label{thm:ClawFreeGraph}{\em\cite{cs07}}
For a claw-free graph $G$, the independence polynomial $I(G,x)$ is real-rooted, so is (ultra, ordered) log-concave.
\end{thm}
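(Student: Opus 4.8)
The plan is to separate the statement into its two halves. The implication ``real-rooted $\Rightarrow$ (ultra, ordered) log-concave'' is not special to claw-free graphs: it is exactly the chain of implications recorded in the Introduction, obtained from Newton's inequalities. Indeed, if $I(G,x)=\sum_k i_k(G)x^k$ has nonnegative coefficients and only real roots, then setting $m=\alpha(G)=\deg I(G,x)$, Newton's inequalities applied to $\{i_k(G)/\binom{m}{k}\}$ give $i_k(G)^2\ge\bigl(1+\frac1k\bigr)\bigl(1+\frac1{m-k}\bigr)\,i_{k-1}(G)\,i_{k+1}(G)$, which is precisely ultra log-concavity; ordered log-concavity and log-concavity then follow since (as noted in the Introduction) these conditions are successively weaker. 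So the entire content lies in the real-rootedness of $I(G,x)$, and I would concentrate there.

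For real-rootedness I would argue by induction on $|V(G)|$ using the standard deletion recurrence. Fix a vertex $v$ and let $N[v]=N(v)\cup\{v\}$ be its closed neighborhood. Splitting independent sets according to whether they contain $v$ yields
$$I(G,x)=I(G-v,x)+x\,I\bigl(G-N[v],x\bigr).$$
Both $G-v$ and $G-N[v]$ are induced subgraphs of $G$, hence claw-free, so by induction each of $I(G-v,x)$ and $I(G-N[v],x)$ is real-rooted. The difficulty is that real-rootedness is not preserved under addition: to conclude, I need the two summands $I(G-v,x)$ and $x\,I(G-N[v],x)$ to be \emph{compatible}, meaning every nonnegative combination $\lambda\,I(G-v,x)+\mu\,x\,I(G-N[v],x)$ is real-rooted. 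The classical criterion is that a finite family of real-rooted polynomials with positive leading coefficients has all nonnegative combinations real-rooted precisely when the family admits a \emph{common interlacing}. Thus the whole problem reduces to producing common interlacers, and this is exactly where I expect to invoke claw-freeness, through the structural fact that for every vertex $v$ one has $\alpha(G[N(v)])\le 2$ (three pairwise non-adjacent neighbors of $v$ would induce a claw).

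To make the induction run, a bare statement about $I(G,x)$ alone is too weak; I would strengthen the inductive hypothesis to a compatibility statement about a whole family of ``restricted'' independence polynomials --- for instance the polynomials $I(G-K,x)$ as $K$ ranges over the cliques contained in a fixed clique, or the polynomials counting independent sets that avoid a prescribed vertex --- and prove that this family has a common interlacing for every claw-free $G$. The recurrence above then exhibits $I(G,x)$ as a nonnegative combination of members of such a family, while the bound $\alpha(G[N(v)])\le 2$ guarantees that contracting a neighborhood introduces only boundedly many new pieces and preserves the interlacing relations among them. The main obstacle, and the technical heart of the argument, is precisely the bookkeeping that keeps the common-interlacing property stable under the deletion-and-contraction step: one must verify that passing from $G$ to $G-v$ and $G-N[v]$ does not destroy the interlacing of the enlarged family, and it is here that the claw-free hypothesis is used in an essential and delicate way. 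This is the core of Chudnovsky and Seymour's argument \cite{cs07}, and I would expect to reproduce their compatible-families framework rather than find a genuine shortcut.
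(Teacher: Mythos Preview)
The paper does not actually prove this theorem: it is quoted as a known result of Chudnovsky and Seymour \cite{cs07} and used only as motivation for the discussion of $l$-matroids in Section~4. So there is no ``paper's own proof'' to compare against.

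That said, your outline is a faithful summary of the Chudnovsky--Seymour argument. The reduction of (ultra, ordered) log-concavity to real-rootedness via Newton's inequalities is correct and standard. The recurrence $I(G,x)=I(G-v,x)+x\,I(G-N[v],x)$ and the strategy of proving that a suitable family of polynomials is \emph{compatible} (equivalently, admits a common interlacer) is exactly their method, and you have correctly identified that the claw-free hypothesis enters through $\alpha(G[N(v)])\le 2$. You are also candid that the proposal is a plan rather than a proof: the ``bookkeeping that keeps the common-interlacing property stable'' is where all the work lies, and you have not carried it out. In Chudnovsky--Seymour the strengthened inductive statement is that the family $\{I(G-K,x):K\text{ a clique of }G\}$ is compatible, and the inductive step uses the fact that for claw-free $G$ the neighborhood $N(v)$ is covered by at most two cliques; reproducing that step is the substance of their paper. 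So your proposal is correct in spirit and direction, but as written it is a roadmap to \cite{cs07} rather than an independent proof.
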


Theorem \ref{thm:ClawFreeGraph} shows that the analogous version of Mason's Conjecture holds true for $\mathcal{M}=(V(G),\mathcal{I}_G)$ when $G$ is claw-free. What is the minimum $l$ such that $\mathcal{M}$ is an $l$-matroid? We have
\begin{pro}\label{pro:IndependenceSystemofCFGraph}
Let $G$ be a claw-free graph, $\mathcal{I}_G$, $\alpha(G)$ and $\mathcal{M}$ defined above. Then $\mathcal{M}$ is a $\left(\left\lfloor\frac{\alpha(G)}{2}\right\rfloor+1\right)$-matroid.
\end{pro}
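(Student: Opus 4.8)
The plan is to verify condition $(\ast)$ directly, arguing by contraposition. Set $l := \left\lfloor\frac{\alpha(G)}{2}\right\rfloor + 1$ and fix independent sets $S, T \in \mathcal{I}_G$. Suppose the exchange fails, i.e.\ for every $x \in T \setminus S$ the set $S \cup \{x\}$ is \emph{not} independent. Since $S$ is independent and $x \notin S$, this forces each such $x$ to have a neighbor in $S$; moreover, because $x \in T$ and $T$ is independent, $x$ is non-adjacent to every vertex of $S \cap T$, so that neighbor must lie in $S \setminus T$. The goal is to show that under this failure assumption $|T| - |S|$ is forced to be at most $\left\lfloor\frac{\alpha(G)}{2}\right\rfloor < l$, which is exactly the contrapositive of $(\ast)$.

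The main tool is the defining local property of claw-free graphs: no vertex has three pairwise non-adjacent neighbors, equivalently, for every vertex $v$ the neighborhood $N(v)$ contains no independent set of size $3$. I would apply this to each vertex $s \in S \setminus T$: the set $N(s) \cap (T \setminus S)$ consists of vertices of $T$, hence is independent, and therefore has size at most $2$ by claw-freeness (otherwise $s$ together with three of its pairwise non-adjacent neighbors would induce a $K_{1,3}$). Writing $a := |S \setminus T|$ and $b := |T \setminus S|$, a double count of the edges between $S \setminus T$ and $T \setminus S$ then yields at most $2a$ such edges (at most two per vertex of $S \setminus T$), while the failure assumption supplies at least $b$ of them (at least one per vertex of $T \setminus S$, with the endpoint in $S \setminus T$ as noted above). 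Hence $b \le 2a$.

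It remains to convert $b \le 2a$ into the desired inequality. From $b \le 2a$ we get $a \ge b/2$, so $|T| - |S| = b - a \le b - b/2 = b/2$. Since $T$ is independent, $b = |T \setminus S| \le |T| \le \alpha(G)$, giving $b - a \le \alpha(G)/2$; as $b - a$ is an integer this sharpens to $b - a \le \left\lfloor\frac{\alpha(G)}{2}\right\rfloor$. Thus whenever $|T| \ge |S| + l = |S| + \left\lfloor\frac{\alpha(G)}{2}\right\rfloor + 1$ the exchange cannot fail, so there exists $x \in T \setminus S$ with $S \cup \{x\} \in \mathcal{I}_G$. This establishes $(\ast)$ for this value of $l$, i.e.\ $\mathcal{M}$ is a $\left(\left\lfloor\frac{\alpha(G)}{2}\right\rfloor + 1\right)$-matroid.

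The bookkeeping (the double count and the integrality step) is routine; the one genuinely load-bearing step is the use of claw-freeness to bound $|N(s) \cap (T \setminus S)| \le 2$, and the one point requiring care is confirming that every neighbor of an $x \in T \setminus S$ inside $S$ actually lies in $S \setminus T$ rather than in $S \cap T$ — which follows at once from the independence of $T$ — since otherwise the lower bound $b$ on the edge count would be unjustified.
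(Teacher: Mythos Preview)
Your proof is correct and follows essentially the same approach as the paper's: both hinge on the observation that claw-freeness bounds each vertex of $S\setminus T$ to at most two neighbors in the independent set $T\setminus S$, combined with the failure hypothesis forcing each vertex of $T\setminus S$ to have a neighbor in $S\setminus T$. The paper reduces first to disjoint $S,T$ and then invokes pigeonhole to locate a claw, whereas you work directly with the symmetric differences and phrase the count as a double count yielding $b\le 2a$; these are the same argument, and your handling of why the neighbor must land in $S\setminus T$ (via independence of $T$) is in fact more explicit than the paper's reduction step.
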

\begin{proof}
Let $G$ be a claw-free graph. We only need to prove that for any two independent sets of $G$, namely $S$ and $T$, if $|T|\geqslant |S|+\left\lfloor\frac{\alpha(G)}{2}\right\rfloor+1$, then there exists a vertex $x\in T\setminus S$ such that $S\cup\{x\}$ is also an independent set of $G$. 

If $S\subset T$, it is trivial. Otherwise, by contradiction, assume there exist two independent sets $S$ and $T$ with $|T|\geqslant |S|+\left\lfloor\frac{\alpha(G)}{2}\right\rfloor+1$ and $S\setminus T\neq\emptyset$ such that $S\cup\{x\}$ is dependent in $G$ for any $x\in T\setminus S$. we can assume $S$ and $T$ are disjoint because we can choose the independent sets $S\setminus T$ and $T\setminus S$ if not.

Since $S\cup\{x\}$ is dependent in $G$ for any $x\in T$, every vertex in $T$ has at least one neighbor in $S$. Since $|S|\leqslant |T|-\left(\left\lfloor\frac{\alpha(G)}{2}\right\rfloor+1\right)\leqslant\alpha(G)-\left\lfloor\frac{\alpha(G)}{2}\right\rfloor-1=\left\lceil\frac{\alpha(G)}{2}\right\rceil-1<\left\lfloor\frac{\alpha(G)}{2}\right\rfloor+1$, $|T|\geqslant |S|+\left\lfloor\frac{\alpha(G)}{2}\right\rfloor+1>2|S|$. By Pigeon-Hole Principle, there must exist a vertex in $S$ adjacent to three vertices in $T$. It forms an induced $K_{1,3}$, a contradiction.
\end{proof}

We conjecture:
\begin{con}\label{con:GeneralizedMasonConjecture}
Let $\mathcal{M}$ be an $n$-element independence system, $r(\mathcal{M})$ the size of maximum independent sets in $\mathcal{M}$ and $i_k$ the number of independent sets of size $k$ in $\mathcal{M}$. If $\mathcal{M}$ is an $l$-matroid with $l\leqslant\left\lfloor\frac{r(\mathcal{M})}{2}\right\rfloor+1$, then
\begin{enumerate}
\renewcommand{\labelenumi}{(\roman{enumi})}
\item the sequence $\{i_k\}_{k=0}^{n}$ is log-concave;
\item the sequence $\{i_k\}_{k=0}^{n}$ is ordered log-concave;
\item the sequence $\{i_k\}_{k=0}^{n}$ is ultra log-concave.
\end{enumerate}
\end{con}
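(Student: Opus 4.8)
The plan is to reduce everything to part (iii). Since $\mathcal{I}$ is downward closed, the presence of one independent set of size $r(\mathcal{M})$ forces $i_k\geqslant\binom{r(\mathcal{M})}{k}>0$ for all $0\leqslant k\leqslant r(\mathcal{M})$, so the sequence has no internal zeros; as ultra log-concavity is the strongest of the three notions, establishing (iii) yields (ii) and (i) at once. To prove (iii) I would run the multivariate Hessian argument used for Theorem \ref{thm:ULC}, with $i_k$ in place of $\bar d_k$. Writing $V=[n]$ and
\[
g_{\mathcal{M}}(y,z_1,\dots,z_n):=\sum_{S\in\mathcal{I}}y^{\,n-|S|}\prod_{i\in S}z_i,
\]
I would form, for each $1\leqslant k\leqslant r(\mathcal{M})-1$, the quadratic form $q_k=\partial_y^{\,n-k-1}\partial_z^{\,k-1}f_{\mathcal{M}}/(k-1)!$ coming from the specialization $f_{\mathcal{M}}(y,z)=g_{\mathcal{M}}(y,z,\dots,z)$. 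The same algebra that closes the proof of Theorem \ref{thm:ULC} shows $\det(\nabla^2q_k)=k^2\big((n-k)!\big)^2\big(\,(1+\tfrac1k)(1+\tfrac1{n-k})\,i_{k-1}i_{k+1}-i_k^2\,\big)$, so ultra log-concavity of $\{i_k\}_{k=0}^{n}$ is \emph{equivalent} to $\det(\nabla^2q_k)\leqslant 0$ for every such $k$, and the problem reduces entirely to these sign conditions.

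The second step is localization. Differentiating $g_{\mathcal{M}}$ along a fixed $S\in\mathcal{I}$ of size $k-1$ isolates the contraction $\mathcal{M}/S:=\{T\subseteq V\setminus S: S\cup T\in\mathcal{I}\}$, yielding as in \eqref{eq:sum} a decomposition $\nabla^2q_k=(n-k-1)!\sum_{S\in\mathcal{I}_{k-1}}Q_S$, where each $Q_S=\left[\begin{smallmatrix}(n-k)(n-k+1) & (n-k)a_S\\ (n-k)a_S & 2b_S\end{smallmatrix}\right]$ records only the number $a_S$ of one-element extensions $x$ with $S\cup\{x\}\in\mathcal{I}$ and the number $b_S$ of two-element extensions. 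Directly from the defining axiom ($\ast$), each contraction $\mathcal{M}/S$ is again an $l$-matroid. The goal becomes to show that the \emph{sum} $\sum_S Q_S$ has nonpositive determinant, i.e. that the quadratic form in the link-type counts built from these $Q_S$ is $\leqslant 0$.

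This summation is exactly where I expect the real difficulty to lie. In the $2$-matroid graph case the links are complements of graphs on $V\setminus S$, so they realize only finitely many local shapes, which is what let the authors collapse $\det(\nabla^2q_k)$ to an explicit $8\times 8$ form $\mathbf{t}^{\mathsf T}A\mathbf{t}$ whose \emph{only} positive coefficient is the $t_1t_8$ cross term and is killed by the hypothesis. For a general $l$-matroid no finite classification of links exists, and symmetric matrices of Lorentzian signature (determinant $\leqslant 0$) are \emph{not} closed under addition, so one cannot argue link-by-link. A further wrinkle is that a naive induction on contractions fails, because the rank $r(\mathcal{M}/S)$ can drop far below $r(\mathcal{M})-|S|$ and the constraint $l\leqslant\lfloor r/2\rfloor+1$ need not survive. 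The plan instead is to extract a \emph{local} inequality tying $b_S$ to $a_S$ from the $l$-exchange axiom applied inside $\mathcal{M}/S$, and then to feed it into a \emph{global} Pigeonhole/counting bound of the kind used in Proposition \ref{pro:IndependenceSystemofCFGraph}: the bound $l\leqslant\lfloor r/2\rfloor+1$ should force that a link with many short extensions cannot simultaneously be ``extremal,'' forbidding the coexistence of the two configurations whose cross term is the sole positive contribution. Carrying out this local-to-global estimate, and identifying the precise quantity playing the role of the $t_1t_8$ term, is the decisive obstacle.

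A cleaner but likely harder alternative would be to prove that $g_{\mathcal{M}}$ is a \emph{Lorentzian polynomial} in the sense of Br\"and\'en--Huh, from which ultra log-concavity of the coefficients is immediate; for matroids ($l=1$) this is precisely the resolved Mason Conjecture \ref{con:MasonConjecture}(iii). The drawback is that Lorentzian-ness demands the Hessian sign condition on every rank-$2$ contraction as a genuine \emph{multivariate} statement, strictly stronger than the univariate determinant inequalities we actually need, and Example \ref{exa:CounterExample} already shows that some bound on $l$ is indispensable. For that reason I would pursue the direct Hessian route above, treating the global summation estimate as the heart of the matter.
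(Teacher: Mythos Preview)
The statement you are addressing is Conjecture~\ref{con:GeneralizedMasonConjecture}: it is posed in the paper as an \emph{open problem}, and the paper gives no proof. There is therefore nothing to compare your attempt against. Your proposal is, appropriately, not a proof but a research outline, and you are candid about this: you flag the ``global summation estimate'' as ``the decisive obstacle'' and leave it unresolved.

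Your setup is sound and faithfully transplants the machinery from the paper's proof of Theorem~\ref{thm:ULC}: the reduction of (i) and (ii) to (iii), the Hessian identity for $q_k$, the decomposition $\nabla^2 q_k=(n-k-1)!\sum_{S}Q_S$ over independent sets of size $k-1$, and the observation that each contraction $\mathcal{M}/S$ is again an $l$-matroid are all correct. But the step you identify as missing is genuinely missing, and the paper offers no way past it either. In the graph case the links $G-S$ fall into eight isomorphism types, which is what collapses $\det(\nabla^2 q_k)$ to the explicit form $\mathbf{t}^{\mathsf T}A\mathbf{t}$ of \eqref{eq:MatrixA}; even there the single positive entry (the $t_1t_8$ coefficient) is not controlled in general, which is precisely why Theorem~\ref{thm:ULC} carries an extra structural hypothesis and why the unconditional statement is still only Conjecture~\ref{con:ULCofDependencePolynomials}. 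For a general $l$-matroid there is no finite link classification at all, and your proposed ``local inequality tying $b_S$ to $a_S$'' together with a Pigeonhole argument in the style of Proposition~\ref{pro:IndependenceSystemofCFGraph} is a plausible heuristic but not yet an argument: you have not stated what the inequality is, nor shown that it suffices to make the aggregate determinant nonpositive. Until that estimate is supplied, the proposal remains a plan rather than a proof, consistent with the conjectural status the paper assigns to the statement.
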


By the way, the analogous conjecture is not true for $\left(\left\lfloor\frac{r(\mathcal{M})}{2}\right\rfloor+2\right)$-matroids. Example \ref{exa:CounterExample} happens to be its counterexample, too.

\vskip 0.2 cm
\noindent{\bf Acknowledgements:} This work is partially supported by National Natural Science Foundation of China (Grants No. 12071194, 11571155).

\end{document}